\theoremstyle{plain}
\newtheorem{thm}{Theorem}[section]
\newtheorem{lemma}[thm]{Lemma}
\newtheorem{corollary}[thm]{Corollary}
\newtoks\prt
\newtheorem{proclaim}[thm]{\the\prt}
\theoremstyle{definition}
\newtheorem{remark}[thm]{Remark}
\newtheorem{definition}[thm]{Definition}
\def\eqn#1$$#2$${\begin{equation}\label#1#2\end{equation}}
\numberwithin{equation}{section}
\newtoks\by
\newtoks\paper
\newtoks\book
\newtoks\jour
\newtoks\yr
\newtoks\pages
\newtoks\vol
\newtoks\publ
\def\ota{{\hbox\vol{???}}}
\def\cLear{\by=\ota\paper=\ota\book=\ota\jour=\ota\yr=\ota
\pages=\ota\vol=\ota\publ=\ota}
\def\endpaper{\the\by, {\the\paper},
\textit{\the\jour} \textbf{\the\vol} (\the\yr), \the\pages.\cLear}
\def\endbook{\the\by, \textit{\the\book}, \the\publ.\cLear}
\def\endprep{\the\by, \textit{\the\paper}, \the\jour.\cLear}
\def\endyearprep{\the\by, \textit{\the\paper}, \the\jour, (\the\yr).\cLear}
\def\name#1#2{#2 #1}
\def\nom{ \rm no. }
\def\phi{\varphi}
\def\epsilon{\varepsilon}
\DeclareMathOperator*{\diam}{diam}
\DeclareMathOperator*{\dist}{dist}
\DeclareMathOperator*{\capacity}{cap}
\newcommand{\mN}{{\mathbb N}}
\newcommand{\en}{{\mathbb N}}
\newcommand{\R}{{\mathbb R}}
\newcommand{\rn}{{\mathbb R}^n}
\newcommand{\mR}{{\mathbb R}}
\newcommand{\mV}{{\mathbb V}}
\def\ve{\boldsymbol v}
\def\varnothing{\emptyset}
\def\f{\tilde{f}}
\newcommand{\mv}{{\mathbf v}}
\newcommand{\cL}{{\mathcal L}}
\newcommand{\cC}{{\mathcal C}}
\newcommand{\cH}{{\mathcal H}}
\title{Injectivity almost everywhere for weak limits of Sobolev homeomorphisms}
\author[O. Bouchala]{Ond\v{r}ej Bouchala}
\address{Department of Mathematical Analysis, Charles University, Sokolovsk\'{a}
  83, 186 00 Prague 8, Czech Republic}
\email{ondrej.bouchala@gmail.com}
\author[S. Hencl]{Stanislav Hencl}
\address{Department of Mathematical Analysis, Charles University, Sokolovsk\'{a}
  83, 186 00
  Prague 8, Czech Republic}
\email{hencl@karlin.mff.cuni.cz}
\author[A. Molchanova]{Anastasia Molchanova}
\address{Faculty of Mathematics, University of Vienna, Oskar-Morgenstern-Platz 1, 1090
Wien, Austria}
\email{anastasia.molchanova@univie.ac.at}
\thanks{OB and SH were supported by the grant GA\v{C}R P201/18-07996S.
AM was partially supported by Austrian Science Fund (FWF) projects M 2670 and F 65. OB was also partially supported the Charles University, project GA UK No. 960119.}
\subjclass[1991]{30C65, 46E35}
\keywords{Injectivity}
\begin{document}

\begin{abstract}
Let $\Omega\subset\rn$ be an open set and let $f\in W^{1,p}(\Omega,\rn)$ be a weak (sequential) limit of Sobolev homeomorphisms. Then $f$ is injective almost everywhere for $p>n-1$ both in the image and in the domain. For $p\leq n-1$ we construct a strong limit of homeomorphisms such that the preimage of a point is a continuum for every point in a set of positive measure in the image and a topological image of a point is a continuum for every point in a set of positive measure in the domain.
\end{abstract}

\maketitle

\section{Introduction}

Let $\Omega\subset\rn$ be an open set and let $f\colon\Omega\to\rn$ be a mapping.
In this paper, we study classes of mappings $f$ that might serve as deformations in Nonlinear Elasticity models. Following the pioneering papers of Ball \cite{Ball} and Ciarlet and Ne\v{c}as \cite{CN} we ask if our mapping is in some sense injective as the physical `non-interpenetration of the matter' asks a deformation to be
one-to-one.

There are several ways how to obtain injectivity or at least injectivity almost everywhere (\textit{a.e.}) of the mapping $f$.
As in \cite{Ball} we can ask that our mapping has finite energy where the energy functional $\int_{\Omega} W(Df)$ contains special terms
(like ratio of powers of $Df$, $\operatorname{adj} Df$ and $J_f$)
and any mapping with finite energy and reasonable boundary data is a homeomorphism (the reader is referred to e.g.\
\cite{HR,IS,MaVi} and \cite{Sv} for related results).

The approach motivated by Ball \cite{Ball} is fine if our mapping is continuous everywhere but in some 
deformations the cavitation or even fractures may occur. To model these phenomena we need conditions which guarantee that our mapping is injective a.e.\ but on some small set bad things may happen.
Ciarlet and Ne\v{c}as \cite{CN} studied the class of mappings that satisfies
\begin{equation}\label{cond:Ciarlet-Necas}
\int_{\Omega} J_f\leq |f(\Omega)|
\end{equation}
together with $J_f>0$ a.e.\ and they showed that mappings of this class are injective a.e.\ in the image, see e.g.\
\cite{Ball2,BK,BHM,BeK,GKMS,MTY,T} for further results in this direction or \cite{KroVal2019,MieRou2016} for numerical treatment.
The inequality  \eqref{cond:Ciarlet-Necas}
is called the Ciarlet--Ne\v{c}as condition nowadays.
Note that the constraint $J_f>0$ a.e.\ is usually assumed in models of Nonlinear Elasticity as the `real deformation' cannot change its orientation and the energy density $W(Df(x))$ should tend to $\infty$ when $J_f(x)\to 0$, i.e.\ when we compress too much.

Another approach can be traced to M\"uller and Spector \cite{MS} where they studied a class of mappings that satisfy $J_f>0$ a.e.\ together with the (INV) condition
(see e.g.\  \cite{BHM,ConDeL2003,HMC,MST,SwaZie2002,SwaZie2004}). They showed that mappings in their class are one-to-one a.e.\ (see Section 5 for more information). Informally speaking, the (INV) condition means that the ball $B(x,r)$ is mapped inside the image of the sphere $f(S(a,r))$ and the complement $\Omega\setminus \overline{B(x,r)}$ is mapped outside $f(S(a,r))$ (see Preliminaries for the formal definition).

In all results in the previous paragraph the authors assume that $f\in W^{1,p}(\Omega)$ for some $p>n-1$.
We show that injectivity a.e.\ may fail horribly for $p\leq n-1$ even though the mapping $f$ is even a strong limit of homeomorphisms. We would like to stress that it fails even in the limiting case $p=n-1$ which is technically more involved. The class of mappings that we study in our project consists of weak (sequential) limits of Sobolev homeomorphisms. Homeomorphisms clearly satisfy the (INV) condition and so their weak limit must as well if $p>n-1$, since in this case the (INV) condition is closed under weak convergence (see \cite[Lemma 3.3]{MS}).
Therefore the class of weak limits of Sobolev homeomorphisms is a suitable class for variational models and one could expect that nice properties of homeomorphisms (like invertibility) could be carried to their weak limit.

The class of weak limits of Sobolev homeomorphisms was recently characterized in the planar case by Iwaniec and Onninen \cite{IO,IO2} and De Philippis and Pratelli \cite{DPP}. Moreover, one can study the orientation of mappings in this class \cite{HO} or even investigate planar BV weak limits and characterize their set of cavities and fractures \cite{CHKR}. In \cite{MV} Molchanova and Vodopyanov studied invertibility a.e.\ of a special subclass of weak limits of homeomorphisms. We generalize some of their results and we show the sharpness of the assumption $p>n-1$. Our first result is about the invertibility a.e.\ in the image. By continuum we mean the image of the segment $[0,1]$ in $\rn$ by a continuous one-to-one mapping. See Preliminaries for the definition of a precise representative of a Sobolev mapping.

\prt{Theorem}
\begin{proclaim}\label{firstthm}
Let $\Omega\subset\rn$ be open and let $f\colon \Omega\to \rn$ be a weak limit of Sobolev  homeomorphisms
$f_k\in W^{1,p}(\Omega,\rn)$, $p>n-1$ for $n>2$ or $p\geq 1$ for $n=2$. Then there is a precise representative $\hat{f}$ and a set $N_1\subset \rn$ of Hausdorff dimension $n-1$ such that the preimage $\hat{f}^{-1}(y)$ consists of only one point for every
$y\in \hat{f}(\Omega)\setminus N_1$.

On the other hand for every $n\geq 3$ there is a continuous mapping $f\colon[-1,1]^n\to [-1,1]^n$ with $J_f>0$ a.e.\ which is a strong limit of Sobolev homeomorphisms $f_k\in W^{1,n-1}([-1,1]^n,\rn)$ with $f_k(x)=x$ for $x\in\partial[-1,1]^n$ such that
$$
\text{ there is }C_A\subset [-1,1]^n \text{ with }|C_A|>0\text{ and }f^{-1}(y)\text{ is a continuum for every }y\in C_A.
$$
\end{proclaim}

Let us point out that the positive part of the statement essentially follows from the known results and techniques (\cite{BHM,MS,MST}) while the counterexample is entirely new and it is our main contribution. In the positive direction we only remove the assumption $J_f>0$ a.e.\ from \cite{MS} to have a mathematically complete theory. It is interesting  that the Hausdorff dimension of the critical set $N_1$ suddenly jumps from $n-1$ to $n$ as $p$ changes from $p>n-1$ to $p=n-1$. Note that the bound of dimension $n-1$ for $N_1$ for $p>n-1$ is sharp as the mapping
$[x_1,x_2,\hdots,x_n]\to[0,x_2,\hdots,x_n]$ shows. In \cite[Section 11]{MS} there is a counterexample (in case $p<n=2$), which shows that the weak limit of a sequence of one-to-one a.e.\ mappings might be two-to-one in a set of positive measure if (INV) is not satisfied. Our counterexample is entirely different as it is $\infty$-to-one and it is in some sense `monotone' as a strong limit of homeomorphisms, which is definitely not the case for a mapping from \cite{MS}.

Our second result is about the invertibility a.e.\ in the domain. See Preliminaries for the definition of the topological image $f^T(x)$.

\prt{Theorem}
\begin{proclaim}\label{secondthm}
Let $\Omega\subset\rn$ be open and let $f\colon\Omega\to \rn$ be a weak limit of Sobolev homeomorphisms
$f_k\in W^{1,p}(\Omega,\rn)$, $p>n-1$ for $n>2$ or $p\geq 1$ for $n=2$. Then there is a set $N_2\subset \rn$ of Hausdorff dimension $n-p$ such that the image $f^T(x)$ consists of only one point for every $x\in \Omega\setminus N_2$. If we moreover assume that $J_f>0$ a.e then there is a set $N_3$ of zero measure such that $f|_{\Omega\setminus N_3}$ is one-to-one.

On the other hand for every $n\geq 3$ there is $\tilde{f}\colon[-1,1]^n\to [-1,1]^n$ with $J_{\tilde{f}}>0$ a.e.\ which is a strong limit of Sobolev homeomorphisms $\f_k\in W^{1,n-1}([-1,1]^n,\rn)$ with $\f_k(x)=x$ for $x\in\partial [-1,1]^n$.
 The quasicontinuous representative of $\tilde{f}$ is one-to-one on $[-1,1]^n$ $($but $\tilde{f}([-1,1]^n)\subsetneq [-1,1]^n$$)$.
There is a continuous mapping $w\colon [-1,1]^n\to\rn$ which is a generalized inverse to
$\tilde{f}$, i.e.\ $w(\tilde{f}(x))=x$ for every $x\in [-1,1]^n$ such that
$$
\text{ there is } C_A\subset[-1,1]^n \text{ with } |C_A|>0 \text{ and } w^{-1}(x) \text{ is a continuum for every } x\in C_A.
$$

\end{proclaim}

Locally constant mapping shows that the assumption $J_f>0$ a.e.\ is needed for the conclusion that $f|_{\Omega\setminus N_3}$ is one-to-one. Moreover, there is no bound for the Hausdorff dimension of $N_3$ as there is a Lipschitz mapping $f$ which maps a set of dimension $n$ to a single point (see Example \ref{last} below).

As in Theorem \ref{firstthm} the positive result essentially follows from the known results (\cite{BHM,MS,MST}) while the counterexample is entirely new. As above the counterexample exists also for the critical exponent $p=n-1$ and there is again a sudden jump in the dimension of the critical set $N_2$ from $n-p\leq 1$ to $n$.

\section{Preliminaries}

By $B(c,r)$ we denote the euclidean ball with center $c\in\rn$ and radius $r>0$, and $S(c,r)$ stands for the corresponding sphere.

\subsection{Precise representative of a Sobolev mapping}\label{sec:PR}

Recall the following result from \cite[Theorem 3.3.3 and Theorem 2.6.16]{Z}.

\prt{Theorem}
\begin{proclaim}\label{repres}
Let $1\leq p\leq n$ and let $f\in W^{1,p}(\rn)$ be a $p$-quasicontinuous representative and set
$$
E_p=\{x\in \rn:\ x\text{ is not a Lebesgue point of }f\}\ .
$$
Then $\dim_{\mathcal{H}}(E_p)\leq n-p$. 
\end{proclaim}

We put
\begin{equation} \label{def:f*}
f^{\ast}(x) =
\begin{cases}
	\displaystyle{\lim_{r\to 0^{+}} \frac{1}{|B(x,r)|} \int_{B(x,r)} f(y) \, dy} & \text{if the limit exists}, \\
	0 & \text{otherwise}.
\end{cases}
\end{equation}
Note, that the representative $f^*$ is $p$-quasicontinuous (see remarks after
\cite[Proposition 2.8]{MS}).
We define a \textit{precise representative} of $f\in W^{1,p}(\Omega,\R^n)$ as any representative which is equal to $f^\ast$
up to a set of $p$-capacity $0$ (see e.g.\ \cite[Section 2.6]{Z} for the definition of capacity).

Here is a useful observation \cite[Lemma 2.9]{MS} about the representative $f^\ast$.
\begin{lemma}\label{lem:MS2.9}
	Let $f_k\to f$ weakly in $W^{1,p}(\Omega, \R^n)$, $a\in \Omega$ and $r_a:=\dist(a,\partial \Omega)$.
	Then there is an $\cL^1$-null set $N_a$ such that
	for any $r\in (0,r_a) \setminus N_a$ there exists a subsequence $f_j$ such that
	$f_j^* \to f^*$ weakly in $W^{1,p}(S(a,r), \R^n)$.
	Furthermore, if $p>n-1$ then
	$f_j^* \to f^*$ uniformly on $S(a,r)$.
\end{lemma}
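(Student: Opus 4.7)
The strategy is a Fubini--type argument on concentric spheres around $a$, combined with reflexivity and Sobolev embedding on each such sphere.

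First I would use polar coordinates to get quantitative control on spheres. Weak convergence in $W^{1,p}(\Omega,\R^n)$ gives a uniform bound $\|f_k\|_{W^{1,p}}\leq M$. Writing $x=a+r\omega$ with $dx=r^{n-1}dr\,d\omega$,
\begin{equation*}
\int_0^{r_a}\bigl(\|f_k^\ast\|_{L^p(S(a,r))}^p+\|\nabla f_k\|_{L^p(S(a,r))}^p\bigr)\,dr=\|f_k\|_{W^{1,p}(B(a,r_a))}^p\leq M^p.
\end{equation*}
By Rellich--Kondrachov, a subsequence (which I still call $f_k$) satisfies $f_k\to f$ strongly in $L^p(B(a,r_a))$, so by Fubini $\int_0^{r_a}\|f_k-f\|_{L^p(S(a,r))}^p\,dr\to 0$, and after another subsequence extraction (still $f_k$) the inner integrand tends to $0$ for a.e.\ $r$. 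Fatou applied to the first display yields $\liminf_k\|f_k^\ast\|_{W^{1,p}(S(a,r))}<\infty$ for a.e.\ $r$. Let $N_a\subset(0,r_a)$ be the union of the null sets on which either of these fails, together with the null set of radii for which $S(a,r)$ has positive $\mathcal{H}^{n-1}$-measure intersection with the set $E_p$ of non-Lebesgue points of $f$ (and each $f_k$); this last set is $\cL^1$-null by Theorem~\ref{repres} together with a standard capacity/Fubini argument, which also guarantees that $f_k^\ast$ and $f^\ast$ restricted to $S(a,r)$ define genuine elements of $W^{1,p}(S(a,r),\R^n)$ for $r\notin N_a$.

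Now fix $r\in(0,r_a)\setminus N_a$. By the Fatou estimate I can pass to a subsequence $\{f_j\}$ (depending on $r$) such that $\|f_j^\ast\|_{W^{1,p}(S(a,r))}$ is bounded, and then by reflexivity of $W^{1,p}(S(a,r),\R^n)$ to a further subsequence with $f_j^\ast\rightharpoonup g$ weakly there. To identify $g=f^\ast$, I observe that the compact embedding $W^{1,p}(S(a,r))\hookrightarrow L^p(S(a,r))$ promotes the weak convergence to $f_j^\ast\to g$ strongly in $L^p(S(a,r))$. Combined with $\|f_j-f\|_{L^p(S(a,r))}\to 0$ from Step one (valid since $r\notin N_a$) and the identification $f_j^\ast=f_j$, $f^\ast=f$ $\mathcal{H}^{n-1}$-a.e.\ on $S(a,r)$, this forces $g=f^\ast$ as an element of $W^{1,p}(S(a,r),\R^n)$.

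For the final assertion, $S(a,r)$ is a smooth compact $(n-1)$-dimensional manifold, so when $p>n-1$ the Morrey--Sobolev embedding $W^{1,p}(S(a,r))\hookrightarrow C^{0,\alpha}(S(a,r))$ with $\alpha=1-(n-1)/p>0$ is compact into $C(S(a,r))$; weak $W^{1,p}$-convergence therefore upgrades automatically to uniform convergence of $f_j^\ast$ to $f^\ast$ on $S(a,r)$.

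The step I expect to demand the most care is the identification $g=f^\ast$: it is necessary to justify, via $p$-capacity and Lebesgue-point considerations, that the null set $N_a$ can be chosen so that on each remaining sphere $f_k^\ast$ and $f^\ast$ agree $\mathcal{H}^{n-1}$-a.e.\ with the abstract traces, so that the two notions of convergence (strong $L^p$ limit of specific representatives vs.\ weak $W^{1,p}(S(a,r))$ limit) can be matched. The remaining steps are soft functional analysis.
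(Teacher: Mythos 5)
The paper does not prove this lemma: it is quoted directly from M\"uller and Spector with a citation to \cite[Lemma 2.9]{MS}, so there is no in-paper proof to compare against. Your Fubini-on-spheres argument is the standard route to this statement and is, in substance, the M\"uller--Spector argument: the polar-coordinate bound, Rellich--Kondrachov to get $L^p$ convergence on a.e.\ sphere along a single fixed subsequence, Fatou for a.e.\ finiteness of $\liminf_k\|f_k^\ast\|_{W^{1,p}(S(a,r))}$, reflexivity plus the compact embedding $W^{1,p}(S(a,r))\hookrightarrow L^p(S(a,r))$ to extract and identify a weak sphere-limit, and Morrey's embedding $W^{1,p}(S(a,r))\hookrightarrow C^0(S(a,r))$ for the uniform upgrade when $p>n-1$.

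Two small remarks. First, the reflexivity extraction needs $p>1$; as stated the lemma does not restrict $p$, and the paper does invoke $p\ge 1$ for $n=2$ elsewhere, but all actual applications of this lemma (Lemma~\ref{telescopic} and what follows) assume $p>n-1\ge 1$, so this is a restriction worth recording, not a genuine gap. Second, the step you flag as the crux---showing that the null set $N_a$ of radii can be chosen so that $f^\ast$ and each $f_k^\ast$ restricted to $S(a,r)$ are genuine $W^{1,p}(S(a,r))$ representatives that agree $\cH^{n-1}$-a.e.\ with the Sobolev traces, so that the $L^p$-limit of the specific representatives can be matched with the weak $W^{1,p}(S(a,r))$ limit---is indeed where M\"uller and Spector spend most of their effort (their Lemma~2.8 and the surrounding capacity--Fubini machinery). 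Your sketch of this is correct in spirit; just note explicitly that you must handle the countable family $\{f_k\}_k\cup\{f\}$ simultaneously, which is fine since the countable union of exceptional radii is still $\cL^1$-null.
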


\subsection{Topological degree}
Given a smooth map $f$ from $\Omega\subset\rn$ into $\rn$ we can define the {\it topological degree} as
$$\deg(f,\Omega,y_0)=\sum_{\{x\in\Omega: f(x)=y\}} \operatorname{sgn}(J_f(x))$$
if $J_f(x)\neq 0$ for each $x\in f^{-1}(y)$.
This definition can be extended to arbitrary continuous mappings and each point, see e.g.\ \cite{FG}.

The value of the degree of a continuous mapping $f\colon\overline{B(a,r)}\to\rn$ depends only on its values on the boundary $S(a,r)$. Thus, given a continuous mapping $f\colon S(a,r)\to\rn$ we use the notation $\deg(f, S(a,r),y)$ for $\deg(\hat{f}, B(a,r),y)$, where
$\hat{f}\colon\overline{B(a,r)}\to\rn$ is any continuous extension of $f\colon S(a,r)\to\rn$.

The degree is known to be stable under uniform convergence (see e.g.~\cite[Theorem 2.3 (1)]{FG}), i.e.
\eqn{stability}
$$
f_k\rightrightarrows f\text{ on }S(b,s)\text{ and }y\notin f(S(b,s))\Rightarrow
\lim_{k\to\infty} \deg(f_k, S(b,s),y)=\deg(f, S(b,s),y).
$$

It is also well-known that for a homeomorphism $f$ and $y\notin f(S(a,r))$ we have
\eqn{monotone}
$$
\deg(f, S(a,r),y)\neq 0\Leftrightarrow y\in B(a,r).
$$

\subsection{(INV) condition}

Suppose that
$f\colon S(a,r) \to \mathbb{R}^n$ is continuous,
following \cite{MS} we define a {\it topological image} of $B(a,r)$ as
$$
f^T(B(a,r)):=\bigl\{y\in \rn\setminus f(S(a,r)):\ \deg(f,S(a,r),y)\neq 0\bigr\}.
$$
Denote
$$E(f,B(a,r)):=f^T(B(a,r))\cup f(S(a,r)).$$

\begin{definition}[(INV) condition]
We say that $f\colon\Omega\to\R^n$ satisfies the condition (INV), provided that for every $a\in\Omega$ there exists an $\cL^1$-null set $N_a$ such that for all $r\in(0,\dist(a,\partial\Omega))\setminus N_a$ the mapping $f|_{S(a,r)}$ is continuous,
\begin{enumerate}[(i)]
	\item $f(x)\in f^T(B(a,r))\cup f(S(a,r))$ for $\cL^n$-a.e.\ $x\in \overline{B(a,r)}$ and
	\item $f(x)\in\R^n\setminus f^T(B(a,r))$ for $\cL^n$-a.e.\ $x\in\Omega\setminus B(a,r)$.
\end{enumerate}
\end{definition}

Moreover, we define the multifunction which describes the topological image $f^T(x)$ of a point as
$$
f^T(x) := \bigcap_{r>0,\ r\notin N_x} E(f^\ast, B(x,r)),
$$
where $f^\ast$ is given by~\eqref{def:f*}.
Let us recall that a quasicontinuous representative of $f\in W^{1,p}(\Omega,\rn)$, $p>n-1$, is continuous for every $x$ on almost every sphere $S(x,r)$.

\subsection{Cantor-set construction}\label{ssec:CS}

Following \cite[Section 4.3]{HK} we consider a Cantor-set construction in $(-1,1)^n$.

Denote the cube with center at
$a$ and edge $2r$ by
$Q(a,r) = (a_1 - r, a_1+r) \times \dots \times (a_n - r, a_n+r) $.
Let
$\mV$ be the set of
$2^n$
vertices of the cube
$[-1, 1]^n \subset \mR^n$
and
$\mV^k = \mV \times \cdots \times \mV$,
$k\in \mN$.
Consider a decreasing sequence
$\{\alpha_k\}_{k=0}^{\infty}$
such that $\alpha_k\approx \alpha_{k+1}$,
$1=\alpha_0\geq \alpha_1 \geq \dots >0$,
$$r_k = 2^{-k} \alpha_k\text{ and }r'_k=2^{-k}\alpha_{k-1}.$$
Set $z_0=0$, then $Q(z_0,r_0) = (-1,1)^n$
and we proceed by induction.
For
$$\mv(k)=(v_1, \dots , v_k) \in \mV^k$$
we denote
$$\mv(k-1)=(v_1, \dots , v_{k-1})$$
and define (see Fig.~\ref{pic:CantorSet})
\begin{equation*}
\begin{aligned}
    & z_{\mv(k)} = z_{\mv(k-1)} + \frac{1}{2} r_{k-1}v_k = z_0 + \frac{1}{2}\sum_{j=1}^{k} r_{j-1}v_j,\\
    & Q'_{\mv(k)} = Q(z_{\mv(k)}, r'_k) \quad \text{and} \quad  Q_{\mv(k)} = Q(z_{\mv(k)}, r_k).
\end{aligned}
\end{equation*}

\begin{figure}[h]
\unitlength=0.7mm
\begin{center}
\begin{picture}(110,50)(0,5)
\put(10,10){\framebox(40,40){}}
\put(30,10){\line(0,1){40}}
\put(10,30){\line(1,0){40}}
\put(12,12){\framebox(16,16){}}
\put(12,32){\framebox(16,16){}}
\put(32,12){\framebox(16,16){}}
\put(32,32){\framebox(16,16){}}
\put(60,10){\framebox(40,40){}}
\put(80,10){\line(0,1){40}}
\put(60,30){\line(1,0){40}}
\put(62,12){\framebox(16,16){}}
\put(70,12){\line(0,1){16}}
\put(62,20){\line(1,0){16}}
\put(63,13){\framebox(6,6){}}
\put(71,13){\framebox(6,6){}}
\put(63,21){\framebox(6,6){}}
\put(71,21){\framebox(6,6){}}
\put(62,32){\framebox(16,16){}}
\put(70,32){\line(0,1){16}}
\put(62,40){\line(1,0){16}}
\put(63,33){\framebox(6,6){}}
\put(71,33){\framebox(6,6){}}
\put(63,41){\framebox(6,6){}}
\put(71,41){\framebox(6,6){}}
\put(82,12){\framebox(16,16){}}
\put(90,12){\line(0,1){16}}
\put(82,20){\line(1,0){16}}
\put(83,13){\framebox(6,6){}}
\put(91,13){\framebox(6,6){}}
\put(83,21){\framebox(6,6){}}
\put(91,21){\framebox(6,6){}}
\put(82,32){\framebox(16,16){}}
\put(90,32){\line(0,1){16}}
\put(82,40){\line(1,0){16}}
\put(83,33){\framebox(6,6){}}
\put(91,33){\framebox(6,6){}}
\put(83,41){\framebox(6,6){}}
\put(91,41){\framebox(6,6){}}
\end{picture}
\end{center}
\caption{Cubes $Q_{\ve(k)}$ and $Q'_{\ve(k)}$ for $k=1$, $2$.}
\label{pic:CantorSet}
\end{figure}

The measure of the $k$-th frame
$Q'_{\mv(k)} \setminus Q_{\mv(k)}$ is
\eqn{measure}
$$
    \cL^n(Q'_{\mv(k)} \setminus Q_{\mv(k)}) =  (2r_k')^n - (2r_k)^n
		\approx 2^{-nk}(\alpha_{k-1}-\alpha_k)\alpha_k^{n-1},
$$
and we have $2^{nk}$ such frames.

Denote
$A:=\{\alpha_k\}_{k=0}^{\infty}$,
the resulting Cantor set
$$
    C_A := \bigcap_{k=1}^{\infty} \bigcup_{\mv(k)\in \mV^{k}} Q_{\mv(k)}
$$
is a product of $n$ Cantor sets $\cC_\alpha$ in $\mR$
$$
    C_A = \cC_\alpha\times \dots \times \cC_\alpha,
$$
and the number of cubes in
$\{Q_{\mv(k)}: \mv(k) \in \mV^k\}$
is
$2^{nk}$.
Hence,
$$\cL^n(C_A)=\lim_{k\to\infty} 2^{nk}(2 \alpha_k 2^{-k})^n = \lim_{k\to\infty}2^n \alpha_k^n.$$

\subsection{Homeomorphism that maps a Cantor set onto another one}\label{sec:map_CS}

Consider two sequences
$A=\{\alpha_k\}_{k=0}^{\infty}$
and
$B=\{\beta_k\}_{k=0}^{\infty}$,
and two Cantor sets
$C_A$ and $C_B$
are designed according Section~\ref{ssec:CS}.
We also define
\begin{equation*}
\begin{aligned}
    & \tilde{r}_k = 2^{-k}\beta_k, \quad \tilde{r}_k' = 2^{-k}\beta_{k-1},\\
    & \tilde{z}_{\mv(k)} = \tilde{z}_{\mv(k-1)} + \frac{1}{2} \tilde{r}_{k-1}v_k = \tilde{z}_0 + \frac{1}{2}\sum_{j=1}^{k} \tilde{r}_{j-1}v_j,\\
    & \tilde{Q}'_{\mv(k)} = Q(\tilde{z}_{\mv(k)}, \tilde{r}_{k}'), \quad  \tilde{Q}_{\mv(k)} = Q(\tilde{z}_{\mv(k)}, \tilde{r}_{k}).
\end{aligned}
\end{equation*}

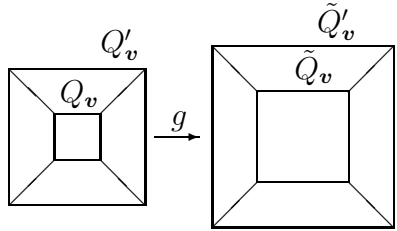
\begin{figure}[h]
\begin{center}
\unitlength=0.6mm
\begin{picture}(115,60)(0,5)
\put(10,20){\framebox(30,30){}}
\put(20,30){\framebox(10,10){}}
\put(10,20){\line(1,1){10}}
\put(40,20){\line(-1,1){10}}
\put(10,50){\line(1,-1){10}}
\put(40,50){\line(-1,-1){10}}
\put(32,53){\makebox(5,5){$Q_{\ve}'$}}
\put(23,42){\makebox(5,5){$Q_{\ve}$}}
\put(42,35){\vector(1,0){10}}
\put(45,36){\makebox(5,5){$g$}}
\put(55,15){\framebox(40,40){}}
\put(65,25){\framebox(20,20){}}
\put(55,15){\line(1,1){10}}
\put(95,15){\line(-1,1){10}}
\put(55,55){\line(1,-1){10}}
\put(95,55){\line(-1,-1){10}}
\put(80,58){\makebox(5,5){$\tilde{Q}'_{\ve}$}}
\put(75,48){\makebox(5,5){$\tilde{Q}_{\ve}$}}
\end{picture}
\end{center}
\caption{The transformation of $Q'_v\setminus Q_v$
onto
$\hat{Q}'_v\setminus \hat{Q}_v$ for $n=2$}
\label{pic:g}
\end{figure}

There exists a homeomorphism
$g$
which maps $C_A$ onto $C_B$ (see Fig.~\ref{pic:g}).
Moreover, in $Q'_{\mv(k)} \setminus Q_{\mv(k)}$ we have analogously to \cite[proof of Theorem 4.10]{HK}
\begin{equation}\label{eq:Dg}
    |Dg(x)| \approx
    \max \left\{\frac{\tilde{r}_k}{r_k}, \frac{\frac{\tilde{r}_{k-1}}{2} - \tilde{r}_{k}}{\frac{r_{k-1}}{2} - r_{k}}\right\} =
    \max \left\{\frac{\beta_k}{\alpha_k}, \frac{\beta_{k-1} -\beta_{k}}{\alpha_{k-1} - \alpha_{k}}\right\}
\end{equation}
and
$$
    J_g(x) \sim
    \frac{\frac{\tilde{r}_{k-1}}{2} - \tilde{r}_{k}}{\frac{r_{k-1}}{2} - r_{k}}\left(\frac{\tilde{r}_k}{r_k}\right)^{n-1}.
$$
Likewise, for $y\in \tilde{Q}'_{\mv(k)} \setminus \tilde{Q}_{\mv(k)}$ we have
\eqn{eq:Dg2}
$$
|Dg^{-1}(y)|\approx\max \left\{\frac{\alpha_k}{\beta_k},
\frac{\alpha_{k-1} - \alpha_{k}}{\beta_{k-1} -\beta_{k}}\right\}
\text{ and }
J_{g^{-1}}(y) \sim
\frac{\frac{r_{k-1}}{2} - r_{k}}{\frac{\tilde{r}_{k-1}}{2} - \tilde{r}_{k}}
		\left(\frac{r_k}{\tilde{r}_k}\right)^{n-1}.
$$

More precisely we define this $g$ as a uniform limit of bilipschitz mappings $g_k$ which map the $k$-th iteration of the Cantor set $C_A$ onto the $k$-th iteration of $C_B$. That is
\eqn{star}
$$
g_{k}(x)=
g(x)\text{ for }x\notin \bigcup_{\mv(k) \in \mV^k} Q_{\mv(k)}
$$
and
$$
g_{k}\text{ maps }Q_{\mv(k)}\text{ onto }\tilde{Q}_{\mv(k)}\text{ linearly for } \mv(k) \in \mV^k.
$$


\subsection{Constructing a Cantor tower}\label{map_CT}

We build a Cantor tower as in \cite{GHT}.

Suppose $n \ge 2$ and denote by $\hat{\mathbb{V}}$ the set of points
\begin{align*}
\bigl(0,0, \ldots, 0, -1+\tfrac{2j-1}{2^{n}} \bigr)
\end{align*}
where $j=1,2, \ldots, 2^{n}$. Sets
\begin{align*}
\hat{\mathbb{V}}^{k} := \hat{\mathbb{V}} \times \cdots \times \hat{\mathbb{V}}, \quad k \in \mathbb{N},
\end{align*}
serve as sets of indices in the construction of a Cantor tower.

Suppose that $\{\beta_{k} \}_{k=0}^{\infty}$ is a decreasing sequence as before with $1 = \beta_{0}$ and $\beta_{i}>2^n \beta_{i+1}$, and define
\eqn{defhatrk}
$$
\hat{r}_{k} := 2^{-k}\beta_{k}\text{ and }\hat{r}'_{k} := 2^{-k}\beta_{k-1}.
$$
Set $\hat{z}_{0} = 0$. Then it follows that $Q(\hat{z}_{0}, \hat{r}_{0}) = (-1,1)^{n}$ and we proceed further by induction. For $\hat{\ve}(k) := (\hat{v}_{1}, \hat{v}_{2}, \ldots, \hat{v}_{k}) \in \hat{\mathbb{V}}^{k}$ we denote
$\hat{\ve}(k-1) := (\hat{v}_{1}, \hat{v}_{2}, \ldots, \hat{v}_{k-1})$ and define (see Fig.~3)
\eqn{qqq}
$$
\begin{aligned}
&\hat{z}_{\hat{\ve}(k)} := \hat{z}_{\hat{\ve}(k-1)} + \hat{r}_{k-1}\hat{v}_{k} = \hat{z}_{0} + \sum_{j=1}^{k} \hat{r}_{j-1}\hat{v}_{j}\\
&\hat{Q}_{\hat{\ve}(k)}' := Q(\hat{z}_{\hat{\ve}(k)}, \hat{r}'_{k}) \text{ and } \hat{Q}_{\hat{\ve}(k)} := Q(\hat{z}_{\hat{\ve}(k)}, \hat{r}_{k})
\end{aligned}
$$

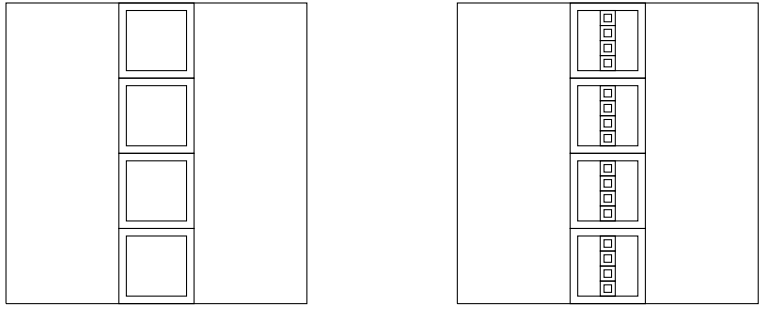
\begin{figure}[ht]
\begin{center}
\begin{tikzpicture}[scale=0.50]

\draw (-4,-4)--(4,-4)--(4,4)--(-4,4)--(-4,-4);

\draw (-1,-4) rectangle (1,-2);
\draw (-0.8,-3.8) rectangle (0.8,-2.2);

\draw (-1,-2) rectangle (1,0);
\draw (-0.8,-1.8) rectangle (0.8,-0.2);

\draw (-1,0) rectangle (1,2);
\draw (-0.8,0.2) rectangle (0.8,1.8);

\draw (-1,2) rectangle (1,4);
\draw (-0.8,2.2) rectangle (0.8,3.8);


\draw (8,-4)--(16,-4)--(16,4)--(8,4)--(8,-4);

\draw (11,-4) rectangle (13,-2);
\draw (11.2,-3.8) rectangle (12.8,-2.2);

\draw (11.8,-3.8) rectangle (12.2,-3.4);
\draw (11.9,-3.7) rectangle (12.1,-3.5);

\draw (11.8,-3.4) rectangle (12.2,-3);
\draw (11.9,-3.3) rectangle (12.1,-3.1);

\draw (11.8,-3) rectangle (12.2,-2.6);
\draw (11.9,-2.9) rectangle (12.1,-2.7);

\draw (11.8,-2.6) rectangle (12.2,-2.2);
\draw (11.9,-2.5) rectangle (12.1,-2.3);


\draw (11,-2) rectangle (13,0);
\draw (11.2,-1.8) rectangle (12.8,-0.2);

\draw (11.8,-1.8) rectangle (12.2,-1.4);
\draw (11.9,-1.7) rectangle (12.1,-1.5);

\draw (11.8,-1.4) rectangle (12.2,-1);
\draw (11.9,-1.3) rectangle (12.1,-1.1);

\draw (11.8,-1) rectangle (12.2,-0.6);
\draw (11.9,-0.9) rectangle (12.1,-0.7);

\draw (11.8,-0.6) rectangle (12.2,-0.2);
\draw (11.9,-0.5) rectangle (12.1,-0.3);

\draw (11,0) rectangle (13,2);
\draw (11.2,0.2) rectangle (12.8,1.8);

\draw (11.8,0.2) rectangle (12.2,0.6);
\draw (11.9,0.3) rectangle (12.1,0.5);

\draw (11.8,0.6) rectangle (12.2,1);
\draw (11.9,0.7) rectangle (12.1,0.9);

\draw (11.8,1) rectangle (12.2,1.4);
\draw (11.9,1.1) rectangle (12.1,1.3);

\draw (11.8,1.4) rectangle (12.2,1.8);
\draw (11.9,1.5) rectangle (12.1,1.7);


\draw (11,2) rectangle (13,4);
\draw (11.2,2.2) rectangle (12.8,3.8);

\draw (11.8,2.2) rectangle (12.2,2.6);
\draw (11.9,2.3) rectangle (12.1,2.5);

\draw (11.8,2.6) rectangle (12.2,3);
\draw (11.9,2.7) rectangle (12.1,2.9);

\draw (11.8,3) rectangle (12.2,3.4);
\draw (11.9,3.1) rectangle (12.1,3.3);

\draw (11.8,3.4) rectangle (12.2,3.8);
\draw (11.9,3.5) rectangle (12.1,3.7);

\end{tikzpicture}
\end{center}

\caption{Cubes $\hat{Q}_{\hat{\ve}(k)}$ and $\hat{Q}'_{\hat{\ve}(k)}$ for $k=1$, $2$ in the construction of the Cantor's tower.}
\end{figure}

\subsection{Bilipschitz mapping which takes a Cantor set onto a Cantor's tower}\label{sec:BiLip}

Let us now define the Cantor set $C_{B}$ as in Section~\ref{ssec:CS} by choosing
\eqn{defbeta}
$$\beta_{k} = 2^{-k\beta},$$
where $\beta \geq n+1$. Using this sequence we also define the Cantor tower $C_{B}^{T}$ as in Section~\ref{map_CT}.
As $\beta\geq n+1$, we see that
$$
\hat{Q}_{\hat{\ve}(k)}=Q(\hat{z}_{\hat{\ve}(k)}, 2^{-k}\beta_{k})\subsetneq
Q(\hat{z}_{\hat{\ve}(k)}, 2^{-1-k}\beta_{k-1})=\frac{1}{2}\hat{Q}_{\hat{\ve}(k)}'
$$
and thus we have enough empty space in $\hat{Q}_{\hat{\ve}(k)}'\setminus \hat{Q}_{\hat{\ve}(k)}$ to move the cubes of the next generation into a tower formation.

The following theorem from \cite[Proposition 2.4]{GHT} gives us a bilipschitz mapping $L \colon \R^{n} \to \R^{n}$ which maps the Cantor set $C_{B}$ onto the Cantor tower $C_{B}^{T}$. We refer to this mapping as a \textit{tower mapping}.


\prt{Theorem}
\begin{proclaim}\label{Bilip}
Suppose that $C_{B}$ is the Cantor set and $C_{B}^{T}$ is the Cantor tower in $\R^{n}$ defined by the sequence
\begin{align*}
\beta_{k}= 2^{-k\beta} \, ,
\end{align*}
where $\beta\geq n+1$. Then there is a bilipschitz mapping $L \colon \R^{n} \to \R^{n}$ which takes $C_{B}$ onto $C_{B}^{T}$.
Moreover,
\eqn{goodmap}
$$
\text{ for every }\hat{\ve}(i)\in\hat{\mathbb{V}}^i\quad L^{-1}(\hat{Q}_{\hat\ve(i)})=\tilde{Q}_{\ve(i)}\text{ for some }\ve(i)\in \mathbb{V}^i.
$$
\end{proclaim}




\subsection{Piecewise linear mappings}\label{sec:tentacle}

We define an auxiliary piecewise linear mapping and estimate its derivative.

Let
\begin{equation}
t_1<t_2<t_3<t_4\text{ and }s_1< s_2< s_3< s_4.
\end{equation}
We consider a piecewise linear mapping $h^{1}\colon [t_1,t_4] \to \R$ with $h(t_i)=s_i$, $i=1,2,3,4$, i.e.
\eqn{defh}
$$
    h(t; [t_1,s_1],[t_2,s_2],[t_3,s_3],[t_4,s_4]) =
    \begin{cases}
        \frac{s_2-s_1}{t_2-t_1}(t-t_1)+s_1,  & \text{ if } t_1\leq t \leq t_2, \\
      \frac{s_3-s_2}{t_3-t_2}(t-t_2)+s_2,  & \text{ if } t_2< t \leq t_3, \\
      \frac{s_4-s_3}{t_4-t_3}(t-t_3)+s_3,  & \text{ if } t_3< t \leq t_4. \\
    \end{cases}
$$
Clearly
\eqn{estder}
$$|Dh(t)|=
\begin{cases}
    \frac{s_2-s_1}{t_2-t_1},  & \text{ if } t_1< t < t_2, \\
      \frac{s_3-s_2}{t_3-t_2},  & \text{ if } t_2< t < t_3, \\
      \frac{s_4-s_3}{t_4-t_3},  & \text{ if } t_3< t < t_4. \\
\end{cases}
$$

\section{Injectivity in the image: counterexample in Theorem \ref{firstthm}}

\subsection{Definition of tentacles}\label{tentacles}

We start with a Cantor tower $C_B^T$ and for each point $y\in C_A$ we find a corresponding point $x\in C_B^T$ (see \eqref{corespond}).
We want to have a continuum $l_x$ (with the end point $x$) which goes onto $y$ by our mapping. For better visualization we first map $C_B^T$ on itself to squeeze this $l_x$ onto $x$ by mapping $h$. Then, with the help of a bilipschitz mapping $L^{-1}$ (Theorem \ref{Bilip}) we transform $C^T_B$ to $C_B$ and finally we map homeomorphically $C_B$ onto $C_A$ by $g^{-1}$ (see Subsection~\ref{sec:map_CS}),
i.e.\
the final mapping
$$
f=g^{-1}\circ L^{-1}\circ h
$$
squeezes $l_x$ onto $y$ (see Fig.~\ref{pic:the_mapping}).

For any $x\in C_B^T$ we find sequence $\hat{\ve}(k)\in \hat{\mathbb{V}}^{k}$ such that
\eqn{corespond}
$$
x=\bigcap_{k=1}^{\infty}\hat{Q}_{\hat{\ve}(k)}\text{ and this corresponds to }y=\bigcap_{k=1}^{\infty}Q_{\ve(k)}\in C_A,
$$
where the mapping $\hat{\ve}(k)\to\ve(k)$ is given by $L^{-1}(\hat{Q}_{\hat{\ve}_k})=\tilde{Q}_{\ve(k)}$ from \eqref{goodmap}.
Now for each $\hat{Q}_{\hat{\ve}(k)}$ we define a tentacle $T_{\hat{\ve}(k)}$ (a long and thin polyhedron) which contains $\hat{Q}_{\hat{\ve}(k)}$ and we set
\begin{equation}\label{def:l}
    l_x:=\bigcap_{k=1}^{\infty}T_{\hat{\ve}(k)}.
\end{equation}
First we define a `straight' tentacle $T^S_{\hat{\ve}(k)}$ and then we adjust it in the next subsection so that
$$
T_{\hat{\ve}(k+1)}\subset T_{\hat{\ve}(k)}\text{ whenever }\hat{\ve}(k+1)\text{ is a continuation of }\hat{\ve}(k),
$$
i.e.\ first $k$ terms of $\hat{\ve}(k+1)$ are exactly $\hat{\ve}(k)$ (see Fig.~\ref{pic:two_gen}).

Take the parameter $\beta$ from \eqref{defbeta}
and recall \eqref{defhatrk}, that is $\hat{r}_k=2^{-k}\beta_k=2^{-k(\beta+1)}$.
We define for $k\in \mathbb{N}$
$$
    a_k  = 1 -\sum_{i=0}^{k} \hat{r}_{i+2} \approx 1,\quad
    c_k  = 1 -\sum_{i=0}^{k-1} \hat{r}_{i+2} \approx 1,
$$
and further we fix decreasing sequences  $0<b_{k+1}<b_k<\frac{1}{e}$ and  $0<d_{k+1}<d_k<\frac{1}{e}$
\eqn{choicebd}
$$
\text{ such that }b_k<d_k<a_k<c_k\text{ and }d_{k+1}< 4^n b_k
$$
whose exact values we find by induction using Lemma \ref{thmH} below.

For $r>0$ and $\rho_1<\rho_2$ we define a parallelepiped
$$
P(r,\varrho_1, \varrho_2): = [\varrho_1,\varrho_2)\times(-r,r)\times \cdots \times  (-r,r).
$$
For each $k$ we also define
$$
P'_k:=P(d_k,\hat{r}_k,c_k)
\quad \text{and} \quad
P_k:=P(b_k,\hat{r}_k,a_k).
$$

Now we define `straight' tentacles as
$$
\begin{aligned}
T'^S_k:=Q(0,\hat{r}_k)\cup P'_k
& \quad \text{and} \quad
T^S_k:=Q(0,\hat{r}_k)\cup P_k, \\
T'^S_{\hat\ve(k)}:=\hat{z}_{\hat\ve(k)}+T'^S_k
& \quad \text{and} \quad
T^S_{\hat\ve(k)}:=\hat{z}_{\hat\ve(k)}+T^S_k. 
\end{aligned}
$$
Both $T'^S_k$ and $T^S_k$ clearly contain $Q(0,\hat{r}_k)$ and note that $T^S_k\subset T'^S_k$ as $c_k>a_k$ and $d_k>b_k$.
Moreover, $P'_k$ and $Q(0,\hat{r}_k)$ have one common side and thus $T'^S_k$ is connected.
Furthermore, the length of each tentacle $T'^S_k$ is bigger than $a_k>1 - \frac{1}{1 -  2^{-\beta-1}}$ and hence
\begin{equation}\label{def:ls}
l^S:=\bigcap_{k=1}^{\infty}T^S_{k}\text{ is a nontrivial segment}.
\end{equation}



\begin{figure}[h]
\begin{center}
\begin{tikzpicture}[line cap=round,line join=round,>=triangle 45,x=0.15cm,y=0.15cm,scale=0.8]
\clip(0,-22) rectangle (140,22);
\draw (20,-20)-- (20,20);
\draw (20,20)-- (-20,20);
\draw (-20,-20)-- (20,-20);
\draw (3,12)-- (3,18);
\draw (3,18)-- (-3,18);
\draw (-3,12)-- (3,12);
\draw (3,2)-- (3,8);
\draw (3,8)-- (-3,8);
\draw (-3,2)-- (3,2);
\draw (3,-8)-- (3,-2);
\draw (3,-2)-- (-3,-2);
\draw (-3,-8)-- (3,-8);
\draw (3,-18)-- (3,-12);
\draw (3,-12)-- (-3,-12);
\draw (-3,-18)-- (3,-18);
\draw (20,8)-- (130,8);
\draw (130,8)-- (130,-8);
\draw (130,-8)-- (20,-8);
\draw (3,16.5)-- (20,7);
\draw (3,13.5)-- (20,4);
\draw (20,7)-- (130,7);
\draw (20,4)-- (130,4);
\draw [dash pattern=on 2pt off 2pt] (3,16.5)-- (130,16.5);
\draw [dash pattern=on 2pt off 2pt] (3,13.5)-- (130,13.5);
\draw [dash pattern=on 2pt off 2pt] (130,16.5)-- (130,13.5);
\draw [->,line width=0.5pt] (90,13) -- (90,7.5);
\draw[color=black] (72,0) node {\scriptsize $T^S_{k-1}$};
\draw[color=black] (133,5.5) node {\scriptsize $T'_k$};
\draw[color=black] (133,15) node {\scriptsize $T'^S_k$};
\draw[color=black] (92,11) node {\scriptsize $S_k$};
\end{tikzpicture}
\end{center}
\caption{Two generations of tentacles.}
\label{pic:two_gen}
\end{figure}
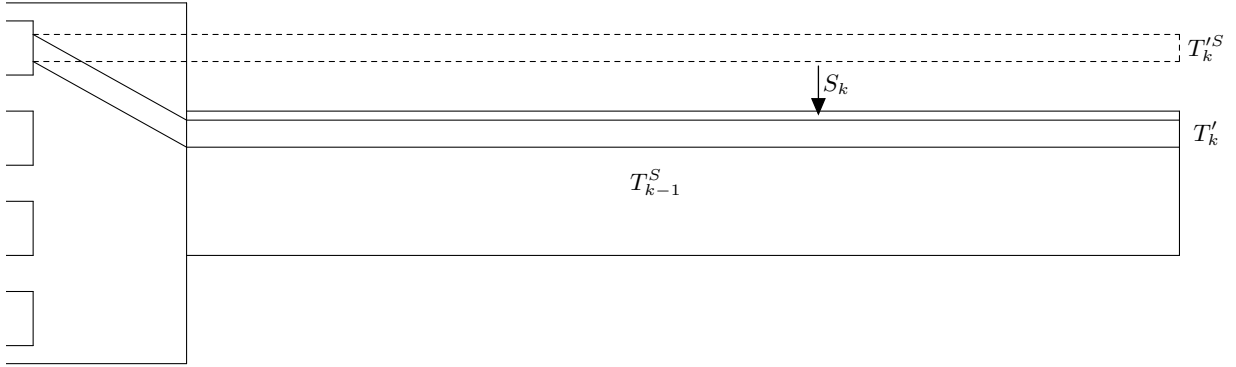

Let us estimate
\begin{equation}\label{meas1}
|P_k | \approx a_k \cdot (2b_k)^{n-1} \approx b_k^{n-1}\text{ and }
|P'_k|\approx c_k \cdot (2d_k)^{n-1} \approx d_k^{n-1}.
\end{equation}

\subsection{Shifting of tentacles into previous tentacles}
In this section we want to shift the `straight' tentacles into `real' tentacles $T_{\hat{\ve}(k)}$ so that
\eqn{subset}
$$
T'_{\hat{\ve}(k+1)}\subset T_{\hat{\ve}(k)}\text{ whenever }\hat{\ve}(k+1)\text{ is a continuation of }\hat{\ve}(k).
$$

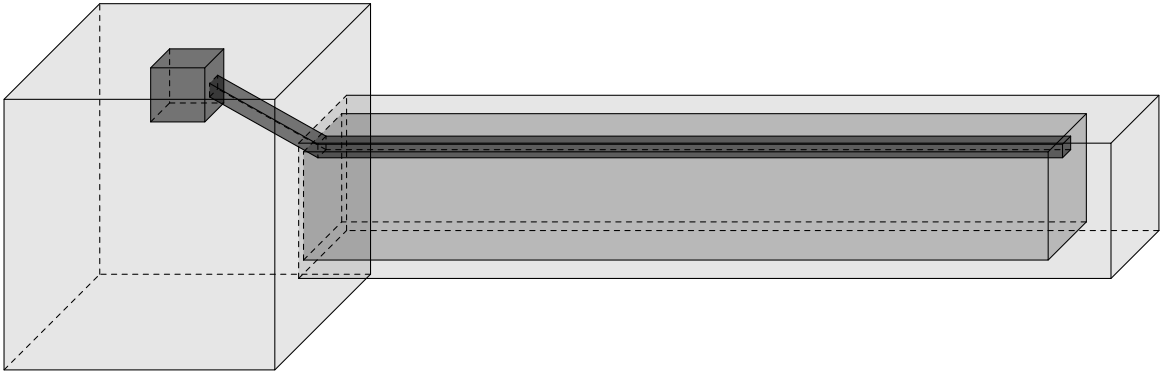
\begin{figure}[h]
\begin{center}
\begin{tikzpicture}[line cap=round,line join=round,>=triangle 45,x=0.18cm,y=0.18cm]
\clip(-15,-15) rectangle (75,15);
\fill[fill=black,fill opacity=0.1] (-13.53,-13.53) -- (-13.53,6.47) -- (-6.46,13.54) -- (13.54,13.54) -- (13.54,-6.46) -- (6.47,-13.53) -- cycle;
\draw (-13.53,-13.53) -- (-13.53,6.47) -- (6.47,6.47) -- (6.47,-13.53) -- cycle;
\draw [dash pattern=on 2pt off 2pt] (-6.46,-6.46) -- (-6.46,13.54);
\draw (-6.46,13.54) -- (13.54,13.54);
\draw (13.54,13.54)-- (13.54,-6.46);
\draw [dash pattern=on 2pt off 2pt] (13.54,-6.46)-- (-6.46,-6.46);
\draw (6.47,-13.53)-- (13.54,-6.46);
\draw (6.47,6.47)-- (13.54,13.54);
\draw (-6.46,13.54)-- (-13.53,6.47);
\draw [dash pattern=on 2pt off 2pt] (-13.53,-13.53)-- (-6.46,-6.46);
\fill[fill=black,fill opacity=0.5] (-1.3,10.2) -- (2.7,10.2) -- (2.7,6.2) -- (1.3,4.8) -- (-2.7,4.8) -- (-2.7,8.8) -- cycle;
\draw (2.7,10.2) -- (-1.3,10.2);
\draw [dash pattern=on 2pt off 2pt] (-1.3,10.2) -- (-1.3,6.2);
\draw [dash pattern=on 2pt off 2pt] (-1.3,6.2) -- (2.7,6.2);
\draw (2.7,6.2) -- (2.7,10.2);
\draw (1.3,8.8) -- (-2.7,8.8) -- (-2.7,4.8) -- (1.3,4.8) -- cycle;
\draw (-1.3,10.2)-- (-2.7,8.8);
\draw (2.7,10.2)-- (1.3,8.8);
\draw (2.7,6.2)-- (1.3,4.8);
\draw [dash pattern=on 2pt off 2pt] (-2.7,4.8)-- (-1.3,6.2);
\fill[fill=black,fill opacity=0.5] (1.65,7.65) -- (2.25,8.25) -- (10.25,3.75) -- (65.25,3.75) -- (65.25,2.75)  -- (64.65,2.15) -- (9.65,2.15) -- (1.65,6.65) -- cycle;
\draw (2.25,8.25) -- (10.25,3.75);
\draw (10.25,3.75) -- (65.25,3.75);
\draw (65.25,3.75) -- (65.25,2.75);
\draw [dash pattern=on 2pt off 2pt] (65.25,2.75) -- (10.25,2.75);
\draw [dash pattern=on 2pt off 2pt] (10.25,2.75) -- (2.25,7.25);
\draw [dash pattern=on 2pt off 2pt] (10.25,2.75) -- (10.25,3.75);
\draw [dash pattern=on 2pt off 2pt] (2.25,7.25) -- (2.25,8.25);
\draw [dash pattern=on 2pt off 2pt] (10.25,2.75) -- (9.65,2.15);
\draw [dash pattern=on 2pt off 2pt] (9.65,3.15) -- (9.65,2.15);
\draw [dash pattern=on 2pt off 2pt] (10.25,3.75) -- (9.65,3.15);
\draw [dash pattern=on 2pt off 2pt] (2.25,8.25)-- (1.65,7.65);
\draw [dash pattern=on 2pt off 2pt] (1.65,6.65)-- (2.25,7.25);
\draw [dash pattern=on 2pt off 2pt] (1.65,6.65)-- (1.65,7.65);
\draw (1.65,7.65) -- (9.65,3.15) -- (64.65,3.15) -- (64.65,2.15) -- (9.65,2.15) -- (1.65,6.65);
\draw (64.65,3.15) -- (65.25,3.75);
\draw (64.65,2.15) -- (65.25,2.75);
\fill[fill=black,fill opacity=0.1] (8.23,-6.77) -- (8.23,3.23) -- (11.77,6.77) --(71.77,6.77) --  (71.77,-3.23) -- (68.23,-6.77) --  cycle;
\draw (8.23,3.23) -- (68.23,3.23) -- (68.23,-6.77) -- (8.23,-6.77) ;
\draw (11.77,6.77) -- (71.77,6.77) -- (71.77,-3.23);
\draw [dash pattern=on 2pt off 2pt] (71.77,-3.23) -- (11.77,-3.23);
\draw [dash pattern=on 2pt off 2pt] (11.77,-3.23)-- (11.77,6.77);
\draw [dash pattern=on 2pt off 2pt] (8.23,-6.77)-- (8.23,3.23);
\draw (71.77,6.77)-- (68.23,3.23);
\draw (71.77,-3.23)-- (68.23,-6.77);
\draw [dash pattern=on 2pt off 2pt] (8.23,-6.77)-- (11.77,-3.23);
\draw [dash pattern=on 2pt off 2pt] (8.23,3.23)-- (11.77,6.77);
\fill[fill=black,fill opacity=0.2] (8.59,2.59) -- (11.41,5.41) -- (66.41,5.41) -- (66.41,-2.59) -- (63.59,-5.41) -- (8.59,-5.41) -- cycle;
\draw (11.41,5.41) -- (66.41,5.41) -- (66.41,-2.59);
\draw [dash pattern=on 2pt off 2pt] (66.41,-2.59) -- (11.41,-2.59);
\draw (8.59,2.59) -- (63.59,2.59) -- (63.59,-5.41) -- (8.59,-5.41);
\draw [dash pattern=on 2pt off 2pt] (11.41,-2.59)-- (11.41,5.41);
\draw [dash pattern=on 2pt off 2pt] (8.59,-5.41)-- (8.59,2.59);
\draw [dash pattern=on 2pt off 2pt] (8.59,2.59)-- (11.41,5.41);
\draw [dash pattern=on 2pt off 2pt] (8.59,-5.41)-- (11.41,-2.59);
\draw (66.41,5.41)-- (63.59,2.59);
\draw (66.41,-2.59)-- (63.59,-5.41);
\end{tikzpicture}
\end{center}
\caption{Tentacles $T_1'$, $T_1$ and $T_2'$}
\label{pic:tentacle}
\end{figure}

Set $T'_1 = T'^S_{1}$, $T_1 = T^S_{1}$.
We need a \textit{shifting} mapping

$$
    (s_k)_{\hat\ve(k)}(t,s) =
    \begin{cases}
        s - (\hat{r}_{k-1} - b_{k-1})(\hat{v}_k)_n,  & \text{if } t \in (\hat{r}_{k-1}, c_k+\hat{r}_{k}],\\
        s - \frac{(\hat{v}_k)_n (\hat{r}_{k-1} - b_{k-1})}{\hat{r}_{k}-\hat{r}_{k-1}}(\hat{r}_{k}-t), & \text{if } t \in (\hat{r}_{k}, \hat{r}_{k-1}],\\
        s, & \text{if } t \in (0,\hat{r}_{k}].
    \end{cases}
$$
For $x\in T'^S_{k}$ define
\begin{equation*}
(S_k)_{\hat\ve(k)}(x_1,\dots,x_n): = \bigl(x_1, \dots, x_{n-1}, (s_k)_{\hat\ve(k)}(x_1, x_n)\bigr).
\end{equation*}
Note that we have shifted the $x_n$ coordinate by
$(\hat{z}_{\ve(k)}-\hat{z}_{\ve(k-1)})_n=\hat{r}_{k-1} (\hat{v}_k)_n$
down (see \eqref{qqq}), i.e.\ we have moved the right part of $k$-tentacle $T'^S_{k}$ to the height of $(k-1)$-tentacle $T^S_{k-1}$
(see Fig.~\ref{pic:two_gen}), and then we moved it by $b_{k-1}(\hat{v}_k)_n$ up so that the position of different $T'^S_{k}$ is different and they are again above each other in the $(k-1)$-tentacle $T^S_{k-1}$ (of height $2b_{k-1}$).

It is easy to see that the Jacobian of this mapping is equal to $1$ and hence it does not change the measure of the tentacles.
We can estimate its derivative as
$$
|D(S_k)_{\hat\ve(k)}| \approx \max\left\{1, \frac{(\hat{v}_k)_n (\hat{r}_{k-1}-b_{k-1})}{\hat{r}_{k} -\hat{r}_{k-1}}\right\}
\approx 1
$$
and moreover
$$|D(S_k)^{-1}_{\hat\ve(k)}| \approx 1.$$

For $\hat\ve(k)=(v_1,\dots,v_k)$ we denote $\hat\ve(j) = (v_1,\dots,v_j)$
and we define
$$
S_{\hat\ve(k)}:=(S_1)_{\hat\ve(1)}\circ \cdots \circ (S_k)_{\hat\ve(k)}.
$$
\begin{remark}\label{rem:sk}
    Note that for each $x\in T'^S_{k} \cap (Q(0,\hat{r}_{j-1})\setminus Q(0,\hat{r}_j))$
    mapping $S_{\hat\ve(k)}(x)$
    is a composition of $k-1$ translations and one bending $(S_j)_{\hat\ve(j)}$
    with
    ${|D(S_j)^{-1}_{\hat\ve(j)}| \approx |D(S_j)_{\hat\ve(j)}| \approx 1}$.
    Hence, this composition is also bilipschitz with a constant that does not depend on $k$.
\end{remark}

Let us define the $k$-th generation as
$$
T'_{\hat\ve(k)}:=S_{\hat\ve(k)}(T'^S_{\hat\ve(k)}) \text{ and } T_{\hat\ve(k)}:=S_{\hat\ve(k)}(T^S_{\hat\ve(k)}), $$
$$
P'_{\hat\ve(k)}:=T'_{\hat\ve(k)}\setminus Q(\hat{z}_{\hat{v}(k)},\hat{r}_k)
\text{ and }P_{\hat\ve(k)}:=T_{\hat\ve(k)}\setminus Q(\hat{z}_{\hat{v}(k)},\hat{r}_k).
$$
This definition ensures \eqref{subset} as
$b_k > 2^n d_{k+1}$ (see Fig.~\ref{pic:two_gen}) by \eqref{choicebd}.
Since the shifting map does not change the volume, we obtain from \eqref{meas1} that 
\eqn{meas3}
$$
|P_{\hat\ve(k)} | \approx b_k^{n-1}\text{ and }
|P'_{\hat\ve(k)}|\approx d_k^{n-1}.
$$

It is clear that the diameter of $l_x$, which is defined by \eqref{def:l}, is bigger than the diameter of $l^S$ (see \eqref{def:l} and \eqref{def:ls})  and hence $l_x$ is a nontrivial continuum.
Moreover,
\eqn{ahoj}
$$
\mathcal{L}_n \bigl(\bigcup_{x\in C_B^T} l_x\bigr) = 0,\text{ since }
\mathcal{L}_n \Bigl(\bigcup_{\ve(k)\in\hat{\mathbb{V}}^{k}} T_{\ve(k)}'\Bigr) \leq
2^{nk} \bigl(d_k^{n-1} + \hat{r}_k^n\bigr) \xrightarrow[k\to\infty]{} 0.
$$


\begin{figure}[ht]
\begin{center}
\begin{tikzpicture}[line cap=round,line join=round,>=triangle 45,x=0.15cm,y=0.15cm, yscale=0.7]
\clip(12,-25) rectangle (115,25);
\fill[fill=black,fill opacity=0.5] (20,3) -- (94,3) -- (94,-3) -- (20,-3) -- cycle;
\draw (20,-20)-- (20,20)-- (-20,20)-- (-20,-20)-- cycle;
\draw (20,10)-- (100,10)-- (100,-10)-- (20,-10);
\draw (20,3)-- (94,3)-- (94,-3)-- (20,-3);
\draw [dash pattern=on 2pt off 2pt] (94,-23) -- (94,25);
\draw [thick, color=green] (94,-10) -- (94,10);
\draw [dash pattern=on 2pt off 2pt] (20,-23) -- (20,25);
\draw [color=black,dash pattern=on 2pt off 2pt] (40,-23) -- (40,25);
\draw [thick, color=blue] (40,-10) -- (40,10);
\draw [dash pattern=on 2pt off 2pt] (110,-23) -- (110,25);
\fill [fill=black,fill opacity=0.35] (20,3) -- (20,10) -- (40,10) -- (40,3) -- cycle;
\fill [fill=black,fill opacity=0.2] (40,10) -- (40,3) -- (94,3) -- (94,10) -- cycle;
\fill [fill=black,fill opacity=0.35] (20,-3) -- (20,-10) -- (40,-10) -- (40,-3) -- cycle;
\fill [fill=black,fill opacity=0.2] (40,-10) -- (40,-3) -- (94,-3) -- (94,-10) -- cycle;
\fill [fill=black,fill opacity=0.05] (94,10) -- (94,-10) -- (100,-10) -- (100,10) -- cycle;
\draw[color=black] (43,23) node {\scriptsize $\hat{r}_{k-1}$};
\draw[color=black] (96,23) node {\scriptsize $a_k$};
\draw[color=black] (113,23) node {\scriptsize $\hat{r}_0$};
\draw[color=black] (23,23) node {\scriptsize $\hat{r}_k$};
\draw[color=black] (97,0) node {\scriptsize ${T}'^S_k$};
\draw[color=black] (88,0) node {\scriptsize ${T}^S_k$};
\end{tikzpicture}
\begin{tikzpicture}[line cap=round,line join=round,>=triangle 45,x=0.15cm,y=0.15cm, yscale=0.7]
\clip(12,-25) rectangle (115,25);
\fill[fill=black,fill opacity=0.5] (20,3) -- (30,3) -- (30,-3) -- (20,-3) -- cycle;
\draw (20,-20)-- (20,20)-- (-20,20)-- (-20,-20)-- cycle;
\draw [dash pattern=on 2pt off 2pt] (60,-23) -- (60,25);
\draw [dash pattern=on 2pt off 2pt] (110,-23) -- (110,25);
\draw [dash pattern=on 2pt off 2pt] (20,-23) -- (20,25);
\draw [dash pattern=on 2pt off 2pt] (30,-23) -- (30,25);
\draw [color=black,dash pattern=on 2pt off 2pt] (40,-23) -- (40,25);
\draw (30,3)-- (30,-3);
\draw (20,10)-- (60,10)-- (60,-10)-- (20,-10);
\draw (20,3)-- (30,3);
\draw (30,-3)-- (20,-3);

\draw [thick, color=blue] (22.7,-3) to [out=-10,in=170] (30,-4)  to [out=-10,in=100] (40,-10);
\draw [thick, color=blue] (22.7,3) to [out=10,in=-170] (30,4) to [out=10,in=-100] (40,10);
\draw [thick, color=blue] (22.7,3)-- (22.7,-3);

\draw [thick, color=green] (30,-3) to [out=-10,in=170] (48,-5) to [out=-10,in=100] (58,-10);
\draw [thick, color=green] (30,3) to [out=10,in=-170] (48,5) to [out=10,in=-100] (58,10);
\draw [thick, color=green] (30,3)-- (30,-3);

\fill [color=black,fill opacity=0.05] (60,-10) -- (58,-10) to [out=100,in=-10](48,-5)  to [out=170,in=-10](30,-3) -- (30,3) to [out=10,in=-170] (48,5) to [out=10,in=-100] (58,10) -- (60,10)--cycle;
\fill [color=black,fill opacity=0.2] (22.7,-3) to [out=-10,in=170] (30,-4) to [out=-10,in=100] (40,-10) -- (58,-10) to [out=100,in=-10](48,-5) to [out=170,in=-10](30,-3) -- cycle;
\fill [color=black,fill opacity=0.2] (22.7,3) to [out=10,in=-170] (30,4) to [out=10,in=-100] (40,10) -- (58,10) to [out=-100,in=5](48,5) to [out=-170,in=10](30,3) -- cycle;
\fill [color=black,fill opacity=0.35] (22.7,-3) to [out=-10,in=170] (30,-4) to [out=-10,in=100] (40,-10) -- (20,-10) -- (20,-3) -- cycle;
\fill [color=black,fill opacity=0.35] (22.7,3) to [out=10,in=-170] (30,4) to [out=10,in=-100] (40,10) -- (20,10) -- (20,3) -- cycle;
\begin{scriptsize}
\draw[color=black] (43,23) node {\scriptsize $\hat{r}_{k-1}$};
\draw[color=black] (63,23) node {\scriptsize $2\hat{r}_{k-1}$};
\draw[color=black] (113,23) node {\scriptsize $\hat{r}_0$};
\draw[color=black] (23,23) node {\scriptsize $\hat{r}_k$};
\draw[color=black] (34.5,23) node {\scriptsize $2\hat{r}_k=\tilde{a}_k$};
\draw[color=black] (57,0) node {\scriptsize $\tilde{T}'^S_k$};
\draw[color=black] (27,0) node {\scriptsize $\tilde{T}^S_k$};
\end{scriptsize}
\end{tikzpicture}
\end{center}
\caption{Tentacle squeezing $H_k$.}
\label{pic:squeezing}
\end{figure}
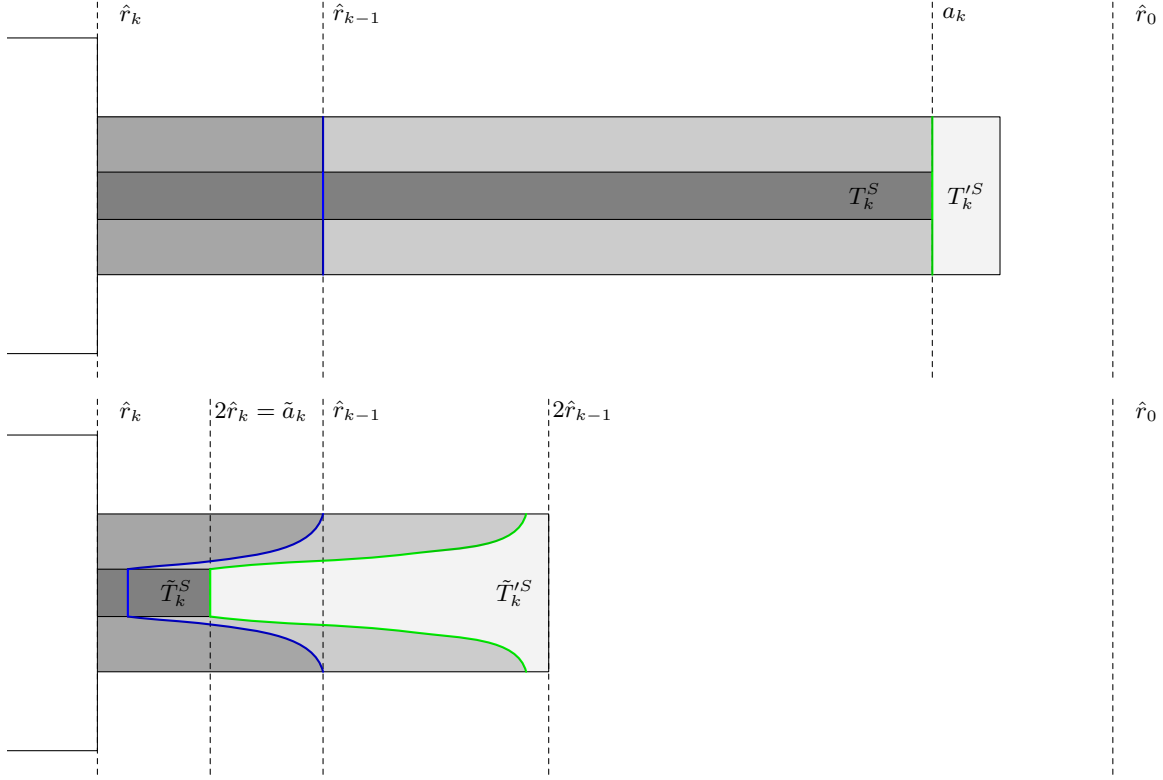

\subsection{Squeezing inside tentacles}
The aim of this section is to obtain a mapping which is identity outside the tentacles and squeezes each continuum
$l_x$ onto $x$ for every $x\in C_B^T$.

Analogously to Section~\ref{tentacles} we define parameters which describe the sizes of squeezed tentacles. We set
$$
\begin{aligned}
    \tilde{a}_k & = 2 \hat{r}_{k}\approx 2^{-k(\beta+1)},\\
    \tilde{c}_k & = \tilde{a}_{k-1} = 2 \hat{r}_{k-1} \approx 2^{-k(\beta+1)}.\\
\end{aligned}
$$
With these parameters we consider for each $k\in \en$
$$
\tilde{P}'_k:=P(d_k,\hat{r}_k,\tilde{c}_k)
\quad \text{and} \quad
\tilde{P}_k:=P(b_k,\hat{r}_k,\tilde{a}_k).
$$
Now the `squeezed' tentacles (see Fig.~\ref{pic:squeezing}) are defined by
$$
\begin{aligned}
\tilde{T}'^S_k:=Q(0,\hat{r}_k)\cup \tilde{P}'_k
& \quad \text{and} \quad
\tilde{T}^S_k:=Q(0,\hat{r}_k)\cup \tilde{P}_k, \\
\tilde{T}'^S_{\hat\ve(k)}:=\hat{z}_{\hat\ve(k)}+\tilde{T}'^S_k
& \quad \text{and} \quad
\tilde{T}^S_{\hat\ve(k)}:=\hat{z}_{\hat\ve(k)}+\tilde{T}^S_k. 
\end{aligned}
$$

With the help of piecewise linear mapping from Section~\ref{sec:tentacle} we can squeeze the `straight' tentacles. The main idea of this construction is that points have zero capacity in $W^{1,n-1}(\R^{n-1})$, i.e.\ the correct truncation of the function
$\log\log\frac{1}{|x|}$ has small support, value $1$ at $0$ and arbitrarily small norm in $W^{1,n-1}$. For $p<n-1$ it would be enough to work with piecewise affine mappings instead of $\log\log\frac{1}{|x|}$.

\prt{Lemma}
\begin{proclaim}\label{thmH}
Let $n\geq 3$, $\delta_k>0$, $\beta\geq n+1$ and $k\in\en$. Then we can find small enough $d_k>b_k>0$ and a bilipschitz mapping $H^S_k\colon Q(0,1)\to Q(0,1)$ such that $H^S_0(x)=x$ for every $x\in Q(0,1)$
$$
H^S_{k}(x)=H^S_{k-1}(x)\text{ for each }x\notin P_k',\ H_k^S(x)=x\text{ for }x\in Q(0,\hat{r}_k)
$$
$$\text{ and }
H^S_k\text{ maps }P_k\text{ onto }\tilde{P}_k\text{ linearly}.
$$
Furthermore, $|DH^S_k(x)|\leq 1$ for $x\in P_k$ and
\eqn{defhk}
$$
\begin{aligned}
\int_{P_k'} |DH^S_k(x)|^{n-1}\; dx&\leq \delta_k.
\end{aligned}
$$
\end{proclaim}
\begin{proof}
Set $H^S_0(x)=x$ and proceed by induction. We define
\eqn{ee}
$$
H_k^S(x)=H_{k-1}^S(x)\text{ for }x\notin P_k'
$$
and it remains to define it on $P_k'$.
Since $H_{k-1}^S$ is identity on
$\{x_1\leq \hat{r}_{k-1}\}$ and $P_k'\cap\{x_1\geq \hat{r}_{k-1}\}\subset P_{k-1}$ (see \eqref{choicebd}) where $H_{k-1}^S$  is linear we obtain that on $\partial P_k'$ we have
\eqn{eee}
$$
\begin{aligned}
&H^S_k(x)=[l_{k-1}(x_1),x_2,\hdots,x_n], \text{ where }
l_{k-1}(x_1)=x_1\text{ for }x_1\leq \hat{r}_{k-1}\text{ and }\\
&\text{ for }x_1\in[\hat{r}_{k-1},c_k]\text{ it is linear with }l_{k-1}(\hat{r}_{k-1})=\hat{r}_{k-1}
\text{ and }l_{k-1}(a_{k-1})=\tilde{a}_{k-1}. \\
\end{aligned}
$$
As $\tilde{a}_{k-1}<a_{k-1}$ we know that the derivative $|l'_{k-1}|\leq 1$ there.

Further, we define it for $x\in\{a_k\}\times [-d_k,d_k]^{n-1}$ as
$$
\begin{aligned}
H_k^S(x)&=[\phi_k(x),x_2,\hdots,x_n]\text{ where }\\
\phi_k(x)&:=l_{k-1}(a_k)-\Bigl(\log\log \frac{1}{\max\{b_k, |[x_2,\hdots,x_n]|_{\infty}\}}-\log\log\frac{1}{d_k}\Bigr), \\
\end{aligned}
$$
where $|[x_2,\hdots,x_n]|_{\infty}=\max\{|x_2|,\hdots,|x_n|\}$. Then it is easy to see $(H_k^S(x))_1=l_{k-1}(a_k)$ when $x_1=a_k$ and $|[x_2,\hdots,x_n]|_{\infty}=d_k$ and thus it agrees with \eqref{eee} there.
Moreover, we fix $d_k$ small enough in such a way as ($C_{\eqref{fixd}}$ is a constant whose exact value we specify later)
\eqn{fixd}
$$
 \frac{2^{(\beta+1)k(n-1)}}{\log^{n-2}\frac{1}{d_k}}<C_{\eqref{fixd}}\delta_k
$$
and we fix $b_k<d_k$ so that (see Fig. \ref{pic:squeezing})
\eqn{fff}
$$
\text{for } |[x_2,\hdots,x_n]|_{\infty}=b_k \text{ we have }
\phi_k(x)=l_{k-1}(a_k)-\Bigl(\log\log \frac{1}{b_k}-\log\log\frac{1}{d_k}\Bigr)=\tilde{a}_k.
$$
For every $x\in P_k$ we have
$|[x_2,\hdots,x_n]|_{\infty}\leq b_k$ and thus $\phi_k(x)=\tilde{a}_k$. Therefore for every $x\in P_k$ we can define
\eqn{aaa}
$$
H_k^S(x)=\bigl[l_{k}(x_1),x_2,\hdots,x_n\bigr]
\text{ where }l_k\text{ is linear with } l_{k}(\hat{r}_{k})=\hat{r}_{k}\text{ and }l_{k}(a_{k})=\tilde{a}_{k}.
$$
It is easy to see that $|DH_k^S|\leq 1$ there and that this agrees with \eqref{eee} used for $k-1$  before.
Finally on the hyperplane $x\in\{\hat{r}_{k-1}\}\times [-d_k,d_k]^{n-1}$ we define it as (see Fig. \ref{pic:squeezing})
$$
\begin{aligned}
H_k^S(x)&=[\psi_k(x),x_2,\hdots,x_n]\text{ where }\\
\psi_k(x)&:=l_{k-1}(\hat{r}_{k-1})-A_k\Bigl(\log\log \frac{1}{\max\{b_k, |[x_2,\hdots,x_n]|_{\infty}\}}-\log\log\frac{1}{d_k}\Bigr). \\
\end{aligned}
$$
As before it agrees with \eqref{eee} for $x_1=\hat{r}_{k-1}$ and $|[x_2,\hdots,x_n]|_{\infty}=d_k$. The constant
$A_k$ is chosen so that for $x\in P_k\cap\{x_1=\hat{r}_{k-1}\}$, i.e.\ for $|[x_2,\hdots,x_n]|_{\infty}\leq b_k$, it goes along with \eqref{aaa}. By this and \eqref{fff} we obtain
$$
\hat{r}_k\leq l_{k-1}(\hat{r}_{k-1})-A_k\Bigl(\log\log\frac{1}{b_k}-\log\log\frac{1}{d_k}\Bigr)
=l_{k-1}(\hat{r}_{k-1})+A_k\Bigl(\tilde{a}_k-l_{k-1}(a_k)\Bigr)
$$
and hence
$$
A_k\leq \frac{l_{k-1}(\hat{r}_{k-1})-\hat{r}_k}{l_{k-1}(a_k)-\tilde{a}_k}
\leq \frac{\hat{r}_{k-1}-\hat{r}_k}{\hat{r}_{k-1}-2\hat{r}_k}\leq C
$$
and so $A_k$ is bounded by a constant independent of $k$.

For every $[x_2,\hdots,x_n]\in [-d_k,d_k]^{n-1}$ we use linear interpolation between values on four hyperplanes ($x_1=\hat{r}_k$, $x_1=\hat{r}_{k-1}$, $x_1=a_k$ and $x_1=c_k$) with the help of the function $h$ from Section~\ref{sec:tentacle} and we define
$$
H^S_k(x)=\Bigl[h\bigl(x_1;[\hat{r}_{k},\hat{r}_{k}],[\hat{r}_{k-1},\psi_k(x)],[a_k,\phi_k(x)],[c_k,l_{k-1}(c_k)]\bigr),x_2,\hdots,x_n\Bigr]
\text{ for }x\in P_k'.
$$
By \eqref{ee} and \eqref{eee} this mapping is continuous.
The mapping $H_k^S$ is bilipschitz on all parts (whilst the bilipschitz constant depends on $k$) and hence it follows immediately that it is bilipschitz on $Q(0,1)$.

It remains to estimate the integrability of the derivative. By \eqref{estder} we obtain that the derivative with respect to the first coordinate can be estimated as
$$
|D_1H_k^S(x)|\leq \begin{cases}
    \frac{\psi_k(x)-\hat{r}_k}{\hat{r}_{k-1}-\hat{r}_k},  & \text{ for } \hat{r}_k< x_1 < \hat{r}_{k-1}, \\
    \frac{\phi_k(x)-\psi_k(x)}{a_k-\hat{r}_{k-1}},  & \text{ for } \hat{r}_{k-1}< x_1 < a_k, \\
    \frac{l_{k-1}(c_k)-\phi_k(x)}{c_k-a_k}, & \text{ if } a_k < x_1 < c_k.
\end{cases}
$$
Since $\phi_k(x)$ takes values between $l_{k-1}(a_k)$ and $\tilde{a}_k$ (see \eqref{fff}) and
$\psi_k(x)$ takes values between $l_{k-1}(\hat{r}_{k-1})=\hat{r}_{k-1}$ and $\tilde{a}_k$ we can estimate this by the universal constant $C$ (where $C$ does not depend on $k$).
Furthermore, by \eqref{defh} we know that we can estimate the derivative with respect to other coordinates by the constant multiple of the corresponding derivative of
$$
\frac{\psi_k(x)-\hat{r}_k}{\hat{r}_{k-1}-\hat{r}_k}+\frac{\phi_k(x)-\psi_k(x)}{a_k-\hat{r}_{k-1}}+\frac{l_{k-1}(c_k)-\phi_k(x)}{c_k-a_k}.
$$
Since $A_k\leq C$ we can estimate this by
$$
C \max\Bigl\{\frac{1}{\hat{r}_{k-1}-\hat{r}_k},\frac{1}{a_k-\hat{r}_{k-1}},\frac{1}{c_k-a_k}\Bigr\}
\Bigl|D\Bigl(\log\log \frac{1}{\max\{b_k, |[x_2,\hdots,x_n]|_{\infty}\}}\Bigr)\Bigr|
$$
The maximum of the three terms can be estimated by $C \frac{1}{\hat{r}_k}\leq C 2^{(\beta+1) k}$ and thus
we can estimate the derivative with respect to other coordinates as
$$
|D_jH_k^S(x)|\leq
\begin{cases}
\frac{C 2^{(\beta +1)k}}{|[x_2,\hdots,x_n]|_{\infty}\log\frac{1}{|[x_2,\hdots,x_n]|_{\infty}}}&\text{ for }b_k<|[x_2,\hdots,x_n]|_{\infty}<d_k,\\
0&\text{ for }|[x_2,\hdots,x_n]|_{\infty}<b_k.\\
\end{cases}
$$
Now a simple change to polar/spherical coordinates in $\R^{n-1}$
and \eqref{fixd} gives us
$$
\begin{aligned}
\int_{P_k'} |DH_k^S(x)|^{n-1}\; dx&\leq C 2^{(\beta+1)k(n-1)}
\int_{_{P_k'}} \frac{1}{|[x_2,\hdots,x_n]|^{n-1}_{\infty}\log^{n-1}\frac{1}{|[x_2,\hdots,x_n]|_{\infty}}}\; dx\\
&\leq C 2^{(\beta+1)k(n-1)}\int_0^{d_k}\frac{1}{r^{n-1}\log^{n-1}\frac{1}{r}}r^{n-2}\; dr\\
&\leq C 2^{(\beta+1)k(n-1)}\frac{1}{\log^{n-2}\frac{1}{d_k}}<C C_{\eqref{fixd}} \delta_k<\delta_k,
\\
\end{aligned}
$$
where we have chosen $C_{\eqref{fixd}}$ in \eqref{fixd} so that the last inequality holds.
\end{proof}

Above we have defined `straight' tentacles $T^{'S}_{\hat{\ve}(k)}$ and we have squeezed them by $H^S_k$ onto squeezed `straight' tentacles $\tilde{T}^{'S}_{\hat{\ve}(k)}$. Analogously we take `real' (=twisted) tentacles $T_{\hat{\ve}(k)}'$ and we squeeze inside them by $H_k$ to obtain `real' squeezed tentacles $\tilde{T}'_{\hat\ve(k)}$.
For $k\geq 1$ we define
\eqn{defHk}
$$
H_k(x):=
S_{\hat\ve(k)}  \circ (H^S_k +\hat{z}_{\hat\ve(k)}) \circ (S^{-1}_{\hat\ve(k)} (x)-\hat{z}_{\hat\ve(k)})
\quad \text{ for }\quad x\in T_{\hat{\ve}(k)}',
$$
$$
\tilde{T}'_{\hat\ve(k)}:=H_k(T'_{\hat\ve(k)})
 \quad \text{and} \quad
\tilde{T}_{\hat\ve(k)}:=H_k(T_{\hat\ve(k)}).
$$


\prt{Theorem}
\begin{proclaim}\label{thmH2}
Let $n\geq 3$, $\tilde{\delta}_k>0$, $\beta\geq n+1$ and $k\in\en$. Then we can find small enough $d_k>b_k>0$ and a bilipschitz mapping
 $h_k\colon Q(0,1)\to Q(0,1)$ such that $h_0(x)=x$ for every $x\in Q(0,1)$,
 \eqn{www}
$$
h_{k-1}(x)=h_k(x)\text{ for }
x\notin \bigcup_{\hat{\ve}(k)\in\hat{\mathbb{V}}^{k}} P_{\hat{\ve}(k)}',\ h_k(x)=x\text{ for }x\in \hat{Q}_{\hat{\ve}(k)}
\text{ and } h_k(P_{\hat{\ve}(k)})=\tilde{P}_{\hat{\ve}(k)}.
$$
We can estimate the integral of its derivative as
\eqn{defsmallhk}
$$
\int_{\bigcup_{\hat{v}(k)\in \hat{\mathbb{V}}^k} P^{'}_{\hat{\ve}(k)}} |Dh_k(x)|^{n-1}\; dx\leq \tilde{\delta}_k.
$$

Moreover, a pointwise limit $h$ of $h_k$ is continuous,
$J_{h}(x) > 0$ a.e., and
$$
h(l_x)=x\text{ for every }x\in C_B^T,
$$
where
$l_x$ is defined by \eqref{def:l}.
\end{proclaim}
\begin{proof}
We set $h_0(x)=x$ and further we define (see \eqref{defHk})
\eqn{defrealhk}
$$
h_k(x)=
\begin{cases}
h_{k-1}(x)&\text{ for }x\notin \bigcup_{\hat{\ve}(k)\in\hat{\mathbb{V}}^{k}} T_{\hat{\ve}(k)}',\\
H_k(x)&\text{ for }x\in T_{\hat{\ve}(k)}',\\
\end{cases}
$$
which clearly fulfills \eqref{www} since $H_k(x)=h_{k-1}(x)=x$ on all $Q(\hat{z}_{\hat{\ve}(k)}, \hat{r}_k)$.

We have $2^{nk}$ different sets $T^{'}_{\hat{\ve}(k)}$ and all of them are bilipschitz copy of $T_k'^S$,
mappings $S_{\hat\ve(k)}$ are bilipschitz with a constant that does not depend on $k$, and hence we obtain by Lemma \ref{thmH} that
$$
\int_{\bigcup_{\hat{v}(k)\in \hat{\mathbb{V}}^k} T^{'}_{\hat{\ve}(k)}} |Dh_k(x)|^{n-1}\; dx\leq 2^{nk} C
\int_{T_k'^S} |DH^S_k(x)|^{n-1}\; dx\leq 2^{nk}C\delta_k.
$$
Given $\tilde{\delta}_k$ we set $\delta_k=\frac{1}{C}2^{-nk}\tilde{\delta}_k$ and find $b_k$ and $d_k$ small enough so that \eqref{defhk} and thus also \eqref{defsmallhk} holds.

Outside of $\bigcup_{\ve(k)\in\hat{\mathbb{V}}^{k}} T_{\hat{\ve}(k)}'$ all mapping $h_l$, $l\geq k$, are equal to $h_{k-1}$ and they are therefore bilipschitz there and $J_{h_l}>0$ a.e. It follows that we can define $h=\lim_{k\to\infty} h_k$ and it is defined everywhere outside of (see \eqref{def:l} and \eqref{subset})
$$
\bigcap_{k=1}^{\infty}\bigcup_{\hat{\ve}(k)\in\hat{\mathbb{V}}^{k}} T_{\hat{\ve}(k)}'=\bigcup_{x\in C_B^T} l_x.
$$
Moreover, it is continuous and $J_h>0$ a.e.\ there.
By \eqref{ahoj} we know that
$
\mathcal{L}_n (\bigcup_{x\in C_B^T} l_x )= 0
$
and then $h$ is defined a.e. Since
$$h_k(T_{\hat{\ve}(k)})=\tilde{T}_{\hat{\ve}(k)}\text{ and }\operatorname{diam} \tilde{T}_{\hat{\ve}(k)}\to 0
$$
it is not difficult to see that $h(l_x)=x$ for every
$x\in C_B^T$. The continuity of $h$ everywhere follows.
\end{proof}

\subsection{Counterexample in Theorem \ref{firstthm}}

\begin{proof}[Construction of the counterexample in Theorem \ref{firstthm}]
Define a Cantor-type set $C_A$ of positive measure by
$$
\alpha_k = \frac{1}{2}\left(1+2^{-k\beta}\right).
$$
We need the sequence of functions $g_k$, built in Section~\ref{sec:map_CS}, to map $C_A$ onto the a Cantor-type set $C_B$ with small enough `windows' defined by \eqref{defbeta}, i.e.
$$
\beta_k = 2^{-k \beta}\text{ with }\beta\geq n+1.
$$
According to \eqref{eq:Dg2} in the $i$-th frame $Q'_{\ve(i)}\setminus Q_{\ve(i)}$, $i\leq k$, we have
\begin{equation}\label{dg-1}
    |D(g_k)^{-1}(x)| \approx
    \max \left\{\frac{\alpha_i}{\beta_i}, \frac{\alpha_{i-1} - \alpha_{i}}{\beta_{i-1} - \beta_{i}}\right\}
    \approx 2^{i\beta}
\end{equation}
and on $\tilde{Q}_{\ve(k)}$ we have
\begin{equation}\label{dg-2}
    |D(g_k)^{-1}(x)| \approx
    \frac{\alpha_k}{\beta_k}\approx 2^{k\beta}.
\end{equation}

We also need the bilipschitz mapping $L$, defined in Section~\ref{map_CT}, to map $C_B$ to a Cantor tower $C^T_B$,
and we have
\begin{equation}\label{dl}
|DL(x)| \leq l, \quad |DL^{-1}(x)| \leq l.
\end{equation}

Let us start from the Cantor tower $C^T_B$ and
apply our mapping $h_k$ from Theorem \ref{thmH2} to squeeze the inner part of the cube.
Then we need a mapping $L^{-1}$ to go from $C^T_B$ to $C_B$,
and $(g_k)^{-1}$ to go to the Cantor set of positive measure $C_A$.
The final mapping $f$ is a pointwise limit of
$$
f_k(x) = (g_k)^{-1} \circ L^{-1} \circ h_k(x)
$$
almost everywhere (see Fig.~\ref{pic:the_mapping}).
Mappings $f_k$ are clearly bilipschitz and below
we show that $f_k\to f$ strongly in $W^{1,n-1}$ and hence $f$ is a strong limit of Sobolev homeomorphisms $f_k$ such that $f_k(x)=x$ on $\partial[-1,1]^n$.
We know that
$g^{-1}=\lim_{k\to \infty} (g_k)^{-1}$ is a homeomorphism which maps $C_B$ onto $C_A$ and that
$L^{-1}$ is a homeomorphism which maps $C_B^T$ onto $C_B$. By Theorem \ref{thmH2} we know that $h=\lim_{k\to\infty}h_k$ is continuous and the standard computation shows that
$$
f(x)=g^{-1}\circ L^{-1}\circ h(x)\text{ and it is a continuous mapping which maps }C_B^T\text{ onto }C_A.
$$
By Theorem \ref{thmH2} we also know that for every $x\in C_B^T$ we have $h(l_x)=x$ and clearly $g^{-1}\circ L^{-1}(x)=y$ where $y$ is the corresponding point in $C_A$ (see \eqref{corespond}). It follows that $f^{-1}(y)$ is a continuum $l_x$ for every
$y\in C_A$. Finally, $J_{L^{-1}}>0$ a.e., $J_h>0$ a.e.\ by Theorem \ref{thmH2}, and by the construction we also have $J_{g^{-1}}>0$ a.e.\ as it is locally equal to some bilipschitz mapping $g_k^{-1}$ on $[-1,1]^n\setminus C_B$.
It is not difficult to see that $f$ is locally bilipschitz on
$[-1,1]^n\setminus \bigcup_{x\in C_B^T}l_x$ and hence we can use the composition formula for derivatives to obtain (see \eqref{ahoj})
$$
J_f(x)=J_{g^{-1}}\bigl(L^{-1}(h(x))\bigr) J_{L^{-1}}\bigl(h(x)\bigr) J_h(x)>0\text{ for a.e. }x\in[-1,1]^n.
$$

\begin{figure}[h]
\begin{center}
\begin{tikzpicture}[line cap=round,line join=round,>=triangle 45,x=0.36cm,y=0.36cm]
\clip(-26,8) rectangle (20,18);

\draw (-25,17)-- (-25,9)-- (-17,9)-- (-17,17) -- cycle;
\draw (-21.3,15.7)-- (-20.7,15.7)-- (-20.7,16.3) -- (-21.3,16.3)-- cycle;
\draw (-20.7,15.9)-- (-17.7,15.9)-- (-17.7,16.1)-- (-20.7,16.1) -- cycle;
\fill[fill=black,fill opacity=0.1] (-20.7,15.9)-- (-17.7,15.9)-- (-17.7,16.1)-- (-20.7,16.1) -- cycle;
\draw (-20.7,14.3) -- (-21.3,14.3) -- (-21.3,13.7) -- (-20.7,13.7) -- cycle;
\draw (-20.7,14.1) -- (-17.7,14.1) -- (-17.7,13.9) -- (-20.7,13.9) -- cycle;
\fill[fill=black,fill opacity=0.1] (-20.7,14.1) -- (-17.7,14.1) -- (-17.7,13.9) -- (-20.7,13.9) -- cycle;
\draw (-21.3,11.7) -- (-20.7,11.7) -- (-20.7,12.3) -- (-21.3,12.3) -- cycle;
\draw (-20.7,11.9) -- (-17.7,11.9) -- (-17.7,12.1) -- (-20.7,12.1) -- cycle;
\fill[fill=black,fill opacity=0.1] (-20.7,11.9) -- (-17.7,11.9) -- (-17.7,12.1) -- (-20.7,12.1) -- cycle;
\draw (-20.7,10.3) -- (-21.3,10.3) -- (-21.3,9.7) -- (-20.7,9.7) -- cycle;
\draw (-20.7,10.1) -- (-17.7,10.1) -- (-17.7,9.9) -- (-20.7,9.9) -- cycle;
\fill[fill=black,fill opacity=0.1] (-20.7,10.1) -- (-17.7,10.1) -- (-17.7,9.9) -- (-20.7,9.9) -- cycle;

\draw (-13,17)-- (-13,9)-- (-5,9)-- (-5,17) -- cycle;
\draw (-8.7,15.7) -- (-9.3,15.7) -- (-9.3,16.3) -- (-8.7,16.3) -- cycle;
\draw (-8.7,15.9) -- (-8.3,15.9) -- (-8.3,16.1) -- (-8.7,16.1) -- cycle;
\fill[fill=black,fill opacity=0.1] (-8.7,15.9) -- (-8.3,15.9) -- (-8.3,16.1) -- (-8.7,16.1) -- cycle;
\draw (-9.3,14.3) -- (-8.7,14.3) -- (-8.7,13.7) -- (-9.3,13.7) -- cycle;
\draw (-8.7,14.1) -- (-8.3,14.1) -- (-8.3,13.9) -- (-8.7,13.9) -- cycle;
\fill[fill=black,fill opacity=0.1] (-8.7,14.1) -- (-8.3,14.1) -- (-8.3,13.9) -- (-8.7,13.9) -- cycle;
\draw (-8.7,11.7) -- (-9.3,11.7) -- (-9.3,12.3) -- (-8.7,12.3) -- cycle;
\draw (-8.7,11.9) -- (-8.3,11.9) -- (-8.3,12.1) -- (-8.7,12.1) -- cycle;
\fill[fill=black,fill opacity=0.1] (-8.7,11.9) -- (-8.3,11.9) -- (-8.3,12.1) -- (-8.7,12.1) -- cycle;
\draw (-9.3,10.3) -- (-8.7,10.3) -- (-8.7,9.7) -- (-9.3,9.7) -- cycle;
\draw (-8.7,10.1) -- (-8.3,10.1) -- (-8.3,9.9) -- (-8.7,9.9) -- cycle;
\fill[fill=black,fill opacity=0.1]  (-8.7,10.1) -- (-8.3,10.1) -- (-8.3,9.9) -- (-8.7,9.9) -- cycle;

\draw (-1,17)-- (-1,9)-- (7,9)-- (7,17) -- cycle;
\draw (0.7,15.3) -- (0.7,14.7) -- (1.3,14.7) -- (1.3,15.3) -- cycle;
\draw (1.3,14.9) -- (4.2,13.9) -- (4.2,14.1) -- (1.3,15.1) -- cycle;
\fill[fill=black,fill opacity=0.1] (1.3,14.9) -- (4.2,13.9) -- (4.2,14.1) -- (1.3,15.1) -- cycle;
\draw (5.3,15.3) -- (5.3,14.7) -- (4.7,14.7) -- (4.7,15.3) -- cycle;
\draw (5.3,15.1) -- (5.4,15.3) -- (4.5,16.1) -- (4.2,16.3) -- (4.7,16.6) -- (4.7,16.5) -- (4.5,16.3) -- (5.6,15.3) -- (5.6,15.2) -- (5.3,14.9) -- cycle;
\fill[fill=black,fill opacity=0.1] (5.3,15.1) -- (5.4,15.3) -- (4.5,16.1) -- (4.2,16.3) -- (4.7,16.6) -- (4.7,16.5) -- (4.5,16.3) -- (5.6,15.3) -- (5.6,15.2) -- (5.3,14.9) -- cycle;
\draw (5.3,10.7) -- (5.3,11.3) -- (4.7,11.3) -- (4.7,10.7) -- cycle;
\draw (5.3,10.9) -- (5.4,10.7) -- (4.5,9.9) -- (4.2,9.7) -- (4.7,9.4) -- (4.7,9.5) -- (4.5,9.7) -- (5.6,10.7) -- (5.6,10.8) -- (5.3,11.1) -- cycle;
\fill[fill=black,fill opacity=0.1] (5.3,10.9) -- (5.4,10.7) -- (4.5,9.9) -- (4.2,9.7) -- (4.7,9.4) -- (4.7,9.5) -- (4.5,9.7) -- (5.6,10.7) -- (5.6,10.8) -- (5.3,11.1) -- cycle;
\draw (0.7,10.7) -- (0.7,11.3) -- (1.3,11.3) -- (1.3,10.7) -- cycle;
\draw (1.3,11.1) -- (4.2,12.1) -- (4.2,11.9) -- (1.3,10.9) -- cycle;
\fill[fill=black,fill opacity=0.1] (1.3,11.1) -- (4.2,12.1) -- (4.2,11.9) -- (1.3,10.9) -- cycle;

\draw (11,9) -- (11,17) -- (19,17) -- (19,9) -- cycle;
\draw (11.5,13.5) -- (11.5,16.5) -- (14.5,16.5) -- (14.5,13.5) -- cycle;
\draw (14.5,15.5) -- (15.3,13.3) -- (15.3,13.1) -- (14.5,14.5) -- cycle;
\fill[fill=black,fill opacity=0.1] (14.5,15.5) -- (15.3,13.3) -- (15.3,13.1) -- (14.5,14.5) -- cycle;
\draw (15.5,13.5) -- (15.5,16.5) -- (18.5,16.5) -- (18.5,13.5) -- cycle;
\draw (18.5,14.5) -- (18.8,15) -- (18.8,16.7) -- (17.2,16.7) -- (17.3,16.8) -- (17.1,16.8) -- (16.8,16.6) -- (18.6,16.6) -- (18.6,15.8) -- (18.5,15.5) -- cycle;
\fill[fill=black,fill opacity=0.1] (18.5,14.5) -- (18.8,15) -- (18.8,16.7) -- (17.2,16.7) -- (17.3,16.8) -- (17.1,16.8) -- (16.8,16.6) -- (18.6,16.6) -- (18.6,15.8) -- (18.5,15.5) -- cycle;
\draw (15.5,9.5) -- (15.5,12.5) -- (18.5,12.5) -- (18.5,9.5) -- cycle;
\draw (18.5,11.5) -- (18.8,11) -- (18.8,9.3) -- (17.2,9.3) -- (17.3,9.2) -- (17.1,9.2) -- (16.8,9.4) -- (18.6,9.4) -- (18.6,10.6) -- (18.5,10.5) -- cycle;
\fill[fill=black,fill opacity=0.1] (18.5,11.5) -- (18.8,11) -- (18.8,9.3) -- (17.2,9.3) -- (17.3,9.2) -- (17.1,9.2) -- (16.8,9.4) -- (18.6,9.4) -- (18.6,10.6) -- (18.5,10.5) -- cycle;
\draw (11.5,9.5) -- (11.5,12.5) -- (14.5,12.5) -- (14.5,9.5) -- cycle;
\draw (14.5,10.5) -- (15.3,12.7) -- (15.3,12.9) -- (14.5,11.5) -- cycle;
\fill[fill=black,fill opacity=0.1] (14.5,10.5) -- (15.3,12.7) -- (15.3,12.9) -- (14.5,11.5) -- cycle;

\draw [->] (-16.3,13) -- (-13.7,13);
\draw [->] (-4.3,13) -- (-1.7,13);
\draw [->] (7.7,13) -- (10.3,13);
\begin{scriptsize}
\draw[color=black] (-15,13.7) node {$h_k$};
\draw[color=black] (-3,13.7) node {$L^{-1}$};
\draw[color=black] (9,13.7) node {$g_k^{-1}$};
\end{scriptsize}
\end{tikzpicture}
\end{center}
\caption{Mapping $f_k$.}
\label{pic:the_mapping}
\end{figure}
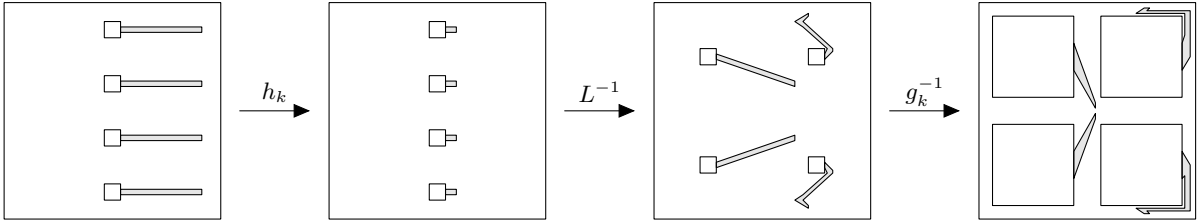

It remains to show that $f\in W^{1,n-1}$. We show that mappings $f_k$ form a Cauchy sequence in
$W^{1,n-1}$. Since $f_k\to f$ pointwise, it is easy to see that $f_k$ converges strongly to $f$.
We have fixed $\beta>n+1$ so that Theorem \ref{Bilip} holds and we set
\eqn{defdeltak}
$$
\tilde{\delta}_k=\frac{2^{-k\beta (n-1)}}{k^2}.
$$
Given this $\tilde{\delta}_k$ we find $d_k>b_k>0$ according to Theorem \ref{thmH2}. Note that all conclusions above ($f$ is continuous, $J_f>0$ a.e.) are valid, but we need this choice of $d_k>b_k$ to show that $f\in W^{1,n-1}$.


By Theorem \ref{thmH2} we know that $h_{k-1}(x)=h_k(x)$ for every $x\notin \bigcup_{\hat{\ve}(k)\in\hat{\mathbb{V}}^{k}} P_{\hat{\ve}(k)}'$ and clearly by \eqref{star}
$g^{-1}_{k-1}(y)=g^{-1}_{k}(y)$ for $y\notin \bigcup_{\ve(k)\in\mathbb{V}^{k-1}} \tilde{Q}_{\ve(k-1)}$.
In view of  \eqref{goodmap} it follows that
$$
f_k(x)=f_{k-1}(x)\text{ for }x\notin \bigcup_{\hat{\ve}(k)\in\hat{\mathbb{V}}^{k}} P_{\hat{\ve}(k)}'\cup
\bigcup_{\hat{\ve}(k-1)\in\hat{\mathbb{V}}^{k-1}}\hat{Q}_{\hat{\ve}(k-1)}
=:M_k.
$$
Therefore
\eqn{prvni}
$$
\int_{Q(0,1)}|Df_{k}-Df_{k-1}|^{n-1}=\int_{M_k}|Df_{k}-Df_{k-1}|^{n-1}
\leq C \int_{M_k}|Df_{k}|^{n-1}+
C\int_{M_{k}}|Df_{k-1}|^{n-1}.
$$

Note that $f_k$ is bilipschitz (as a composition of bilipschitz mappings) and hence we can compute its derivative a.e.\ by the composition of derivatives. With the help of \eqref{dl} we get
$$
|Df_k(x)|\leq |Dg_k^{-1}|\cdot |DL^{-1}|\cdot |Dh_k|
\leq l \bigl|Dg_k^{-1}(L^{-1}\circ h_k(x))\bigr|\cdot \bigl|Dh_k(x)\bigr|.
$$
By \eqref{dg-1} and \eqref{dg-2} we know that everywhere in $Q(0,1)$ we have $|Dg^{-1}_k|\leq C 2^{k\beta}$ and
$|Dg^{-1}_{k-1}|\leq C 2^{k\beta}$. It follows that
$$
|Df_k(x)|\leq C 2^{k\beta}|Dh_k(x)|\text{ and }
|Df_{k-1}(x)|\leq C 2^{k\beta}|Dh_{k-1}(x)|.
$$

For $x\in\hat{Q}_{\hat{\ve}(k-1)}\setminus \bigcup_{\hat{\ve}(k)\in\hat{\mathbb{V}}^{k}} P_{\hat{\ve}(k)}'$ we know that $h_k(x)=h_{k-1}(x)=x$ by Theorem \ref{thmH2} and hence
$$
\begin{aligned}
\int_{\bigcup_{\hat{\ve}(k-1)\in\mV^{k-1}}\hat{Q}_{\hat{\ve}(k-1)}\setminus \bigcup_{\hat{\ve}(k)\in\hat{\mathbb{V}}^{k}} P_{\hat{\ve}(k)}'}|Df_k|^{n-1}
&\leq
C 2^{k\beta(n-1)}\mathcal{L}^n\Bigl(\bigcup_{\hat{\ve}(k)\in\mV^k}\hat{Q}_{\hat{\ve}(k)}\Bigr)\\
&\leq C 2^{k\beta(n-1)} 2^{nk} (2^{-k} 2^{-k\beta})^n\leq C 2^{-k\beta}. \\
\end{aligned}
$$
With the help of Theorem \ref{thmH2} and \eqref{defdeltak} we obtain
$$
\int_{\bigcup_{\hat{\ve}(k)\in\mV^k}P_{\hat{\ve}(k)}'}|Df_{k}|^{n-1}\leq C 2^{k\beta(n-1)}
\int_{\bigcup_{\hat{\ve}(k)\in\mV^k}P_{\hat{\ve}(k)}'   }|Dh_{k}|^{n-1}
\leq C 2^{k\beta(n-1)}\tilde{\delta}_k\leq \frac{C}{k^2}.
$$
Analogous estimate holds also for $Df_{k-1}$ and hence \eqref{prvni} implies that
$$
\int_{Q(0,1)}|Df_{k}-Df_{k-1}|^{n-1}\leq C 2^{-k\beta}+\frac{C}{k^2}.
$$
Since $2^{-k\beta}+1/k^2$ is a convergent series it follows immediately that $f_k$ form a Cauchy sequence in $W^{1,n-1}$.
It follows that $f\in W^{1,n-1}$.
\end{proof}


\section{Injectivity in the domain: counterexample in Theorem \ref{secondthm}}

\subsection{Stretching inside tentacles}

The following gives us an analogy of Lemma \ref{thmH}. We can view the mapping $\tilde{H}_k^S$ as the inverse of $H_k^S$ from Lemma \ref{thmH} but formally we define it otherwise so our estimates are simpler.

\prt{Lemma}
\begin{proclaim}\label{thmtildeH}
Let $n\geq 3$, $\delta_k>0$, $\beta\geq n+1$ and $k\in\en$. Then we can find small enough $d_k>b_k>0$ and a bilipschitz mapping $\tilde{H}^S_k\colon Q(0,1)\to Q(0,1)$ such that $\tilde{H}^S_0(x)=x$ for every $x\in Q(0,1)$
$$
\tilde{H}^S_{k}(x)=\tilde{H}^S_{k-1}(x)\text{ for }x\notin \tilde{P}_k',\ \tilde{H}^S_{k}(x)=x\text{ for }
x\in Q(0,\hat{r}_k)
$$
$$
\text{ and }
\tilde{H}^S_k\text{ maps }\tilde{P}_k\text{ onto }P_k\text{ linearly}.
$$
Furthermore, 
\eqn{defhk2}
$$
\begin{aligned}
\int_{\tilde{P}_k'^S} |D\tilde{H}^S_k(x)|^{n-1}\; dx&\leq \delta_k.
\end{aligned}
$$
\end{proclaim}
\begin{proof}
This proof is analogous to the proof of Lemma \ref{thmH} and hence we skip some details.
We set $\tilde{H}^S_0(x)=x$ and we define
\eqn{ee2}
$$
\tilde{H}_k^S(x)=\tilde{H}_{k-1}^S(x)\text{ for }x\notin \tilde{P}_k'.
$$
Then on $\partial \tilde{P}_k'$ we have
\eqn{eee2}
$$
\begin{aligned}
&\tilde{H}^S_k(x)=[\tilde{l}_{k-1}(x_1),x_2,\hdots,x_n], \text{ where }
\tilde{l}_{k-1}(x)=x_1\text{ for }x_1\leq \hat{r}_{k-1}\text{ and }\\
&\text{ for }x_1\in[\hat{r}_{k-1},\tilde{c}_k]\text{ it is linear with }\tilde{l}_{k-1}(\hat{r}_{k-1})=\hat{r}_{k-1}
\text{ and }\tilde{l}_{k-1}(\tilde{a}_{k-1})=a_{k-1}. \\
\end{aligned}
$$

Further, we define it for $x\in\{\tilde{a}_k\}\times [-d_k,d_k]^{n-1}$ as
$$
\begin{aligned}
\tilde{H}^S_k(x)&=[\tilde{\phi}_k(x),x_2,\hdots,x_n]\text{ where }\\
\tilde{\phi}_k(x)&:=\tilde{l}_{k-1}(\tilde{a}_k)+\Bigl(\log\log \frac{1}{\max\{b_k, |[x_2,\hdots,x_n]|_{\infty}\}}-\log\log\frac{1}{d_k}\Bigr). \\
\end{aligned}
$$
We fix $d_k$ small enough so that ($C_{\eqref{fixd2}}$ is a constant whose exact value we specify later)
\eqn{fixd2}
$$
\frac{2^{(\beta+1)k(n-1)}}{\log^{n-2}\frac{1}{d_k}}<C_{\eqref{fixd2}}\delta_k
$$
and we fix $b_k<d_k$ so that
\eqn{fff2}
$$
\text{for } |[x_2,\hdots,x_n]|_{\infty}=b_k \text{ we have }
\tilde{\phi}_k(x)=\tilde{l}_{k-1}(\tilde{a}_k)+\Bigl(\log\log \frac{1}{b_k}-\log\log\frac{1}{d_k}\Bigr)=a_k.
$$
For every $x\in \tilde{P}_k$ we can now define
\eqn{aaa2}
$$
\tilde{H}_k^S(x)=\bigl[\tilde{l}_{k}(x_1),x_2,\hdots,x_n\bigr]
\text{ where }\tilde{l}_k\text{ is linear with } \tilde{l}_{k}(\hat{r}_{k})=\hat{r}_{k}\text{ and }\tilde{l}_{k}(\tilde{a}_{k})=a_{k}.
$$
Finally on the hyperplane $x\in\{\hat{r}_{k-1}\}\times [-d_k,d_k]^{n-1}$ we define it as
$$
\begin{aligned}
\tilde{H}_k^S(x)&=[\tilde{\psi}_k(x),x_2,\hdots,x_n]\text{ where }\\
\tilde{\psi}_k(x)&:=\tilde{l}_{k-1}(\hat{r}_{k-1})+\tilde{A}_k\Bigl(\log\log \frac{1}{\max\{b_k, |[x_2,\hdots,x_n]|_{\infty}\}}-\log\log\frac{1}{d_k}\Bigr). \\
\end{aligned}
$$
The constant $\tilde{A}_k$ is chosen so that for $x\in \tilde{P}_k\cap\{x_1=\hat{r}_{k-1}\}$, i.e.\ for $|[x_2,\hdots,x_n]|_{\infty}\leq b_k$, we have 
$$
\tilde{l}_{k-1}(\hat{r}_{k-1})+\tilde{A}_k\Bigl(\log\log\frac{1}{b_k}-\log\log\frac{1}{d_k}\Bigr)=\frac{a_k+c_k}{2}.
$$
By this and \eqref{fff2} we obtain
$$
1\geq \tilde{l}_{k-1}(\hat{r}_{k-1})+\tilde{A}_k\Bigl(\log\log\frac{1}{b_k}-\log\log\frac{1}{d_k}\Bigr)
=\tilde{l}_{k-1}(\hat{r}_{k-1})+\tilde{A}_k\Bigl(a_k-\tilde{l}_{k-1}(\tilde{a}_k)\Bigr)
$$
and hence
$$
\tilde{A}_k\leq \frac{1}{a_k-\tilde{l}_{k-1}(\tilde{a}_k)}= \frac{1}{a_k-\tilde{a}_k} \leq C.
$$

For every $x\in [\hat{r}_k,\tilde{c}_k]\times[-d_k,d_k]^{n-1}$ 
we define
$$
\tilde{H}^S_k(x)=\Bigl[h\bigl(x_1;[\hat{r}_{k},\hat{r}_{k}];[\hat{r}_{k-1},\tilde{\psi}_k(x)],[\tilde{a}_k,\tilde{\phi}_k(x)],[\tilde{c}_k,\tilde{l}_{k-1}(\tilde{c}_k)]\bigr),x_2,\hdots,x_n\Bigr]
\text{ for }x\in \tilde{P}_k'.
$$
Again $\tilde{H}_k^S$ is bilipschitz on $Q(0,1)$.
By \eqref{estder} we estimate the derivative with respect to first coordinate
$$
|D_1\tilde{H}_k^S(x)|\leq \begin{cases}
    \frac{\tilde{\psi}_k(x)-\hat{r}_k}{\hat{r}_{k-1}-\hat{r}_k},  & \text{ for } \hat{r}_k< x_1 < \hat{r}_{k-1}, \\
    \frac{\tilde{\phi}_k(x)-\tilde{\psi}_k(x)}{\tilde{a}_k-\hat{r}_{k-1}},  & \text{ for } \hat{r}_{k-1}< x_1 < \tilde{a}_k, \\
    \frac{l_{k-1}(\tilde{c}_k)-\tilde{\phi}_k(x)}{\tilde{c}_k-\tilde{a}_k}, & \text{ if } \tilde{a}_k < x_1 < \tilde{c}_k.
\end{cases}
$$
and this is clearly bounded by $C 2^{(\beta+1)k}$.
Furthermore, by \eqref{defh} and $\tilde{A}_k\leq C$ we know that we can estimate the derivative with respect to other coordinates by
$$
C \max\Bigl\{\frac{1}{\hat{r}_{k-1}-\hat{r}_k},\frac{1}{\tilde{a}_k-\hat{r}_{k-1}},\frac{1}{\tilde{c}_k-\tilde{a}_k}\Bigr\}
\Bigl|D\Bigl(\log\log \frac{1}{\max\{b_k, |[x_2,\hdots,x_n]|_{\infty}\}}\Bigr)\Bigr|
$$
The maximum of the three terms can be estimated by $C \frac{1}{\hat{r}_k}\leq C 2^{(\beta+1) k}$
an a simple change to polar/spherical coordinates in $\R^{n-1}$ and \eqref{fixd2} gives us
$$
\begin{aligned}
\int_{\tilde{P}_k'} |D\tilde{H}_k^S(x)|^{n-1}\; dx&\leq C 2^{(\beta+1)k(n-1)}
\int_{\tilde{P}_k'} \frac{1}{|[x_2,\hdots,x_n]|^{n-1}_{\infty}\log^{n-1}\frac{1}{|[x_2,\hdots,x_n]|_{\infty}}}\; dx\\
&\leq C 2^{(\beta+1)k(n-1)}\int_0^{d_k}\frac{1}{r^{n-1}\log^{n-1}\frac{1}{r}}r^{n-2}\; dr\\
&\leq C 2^{(\beta+1)k(n-1)}\frac{1}{\log^{n-2}\frac{1}{d_k}}<C C_{\eqref{fixd2}}\delta_k <\delta_k,
\\
\end{aligned}
$$
where we have chosen $C_{\eqref{fixd2}}$ in \eqref{fixd2} so that the last inequality holds.
\end{proof}

Analogously to Theorem \ref{thmH2} we now obtain:

\prt{Theorem}
\begin{proclaim}\label{thmtildeH2}
Let $n\geq 3$, $\tilde{\delta}_k>0$, $\beta\geq n+1$ and $k\in\en$. Then we can find small enough $d_k>b_k>0$ and a bilipschitz mapping
 $\tilde{h}_k\colon Q(0,1)\to Q(0,1)$ such that $\tilde{h}_0(x)=x$ for every $x\in Q(0,1)$,
 \eqn{www2}
$$
\tilde{h}_{k+1}(x)=\tilde{h}_k(x)\text{ for }
x\notin \bigcup_{\hat{\ve}(k)\in\hat{\mathbb{V}}^{k}} \tilde{P}_{\hat{\ve}(k)}',\ \tilde{h}_k(x)=x\text{ for }x\in \hat{Q}_{\hat{\ve}(k)}
\text{ and } \tilde{h}_k(\tilde{P}_{\hat{\ve}(k)})=P_{\hat{\ve}(k)}.
$$
We can estimate the integral of its derivative as
$$
\int_{\bigcup_{\hat{v}(k)\in \hat{\mathbb{V}}^k} \tilde{P}^{'}_{\hat{\ve}(k)}} |D\tilde{h}_k(x)|^{n-1}\; dx\leq \tilde{\delta}_k.
$$

Moreover, a pointwise limit $\tilde{h}$ of $\tilde{h}_k$ is continuous and one-to-one on $Q(0,1)$ and
$J_{\tilde{h}}(x) > 0$ a.e. And, there is a continuous $\tilde{t}:Q(0,1)\to Q(0,1)$ which is a generalized inverse to
$\tilde{h}$, i.e.\ $\tilde{t}(\tilde{h}(x))=x$ for every $x\in [-1,1]^n$. On the other hand,
\eqn{ppp}
$$
\tilde{t}(l_x)=x\text{ for every }x\in C_B^T,
$$
where
$l_x$ is defined by \eqref{def:l}.
\end{proclaim}
\begin{proof}
The proof of the next theorem is analogous to the proof of Theorem \ref{thmH2} and therefore we skip it. We only explain why \eqref{ppp} holds.

Outside of $\bigcup_{\ve(k)\in\hat{\mathbb{V}}^{k}} \tilde{T}_{\hat{\ve}(k)}'$ all mappings $\tilde{h}_l$, $l\geq k$, are equal to $\tilde{h}_{k-1}$ and hence they are bilipschitz there and $J_{\tilde{h}_l}>0$ a.e. It follows that we can define $\tilde{h}=\lim_{k\to\infty} \tilde{h}_k$ everywhere outside of
$$
\bigcap_{k=1}^{\infty}\bigcup_{\hat{\ve}(k)\in\hat{\mathbb{V}}^{k}} \tilde{T}_{\hat{\ve}(k)}'=C_B^T
$$
and it is one-to-one and continuous there with $J_{\tilde{h}}>0$ a.e. For $x\in C_B^T$ we define $\tilde{h}(x)=x$ and notice that now $\tilde{h}$ is one-to-one everywhere. 

We define $\tilde{t}=\tilde{h}^{-1}$ on $Q(0,1)\setminus \tilde{h}(C_B^T)$ and notice that $\tilde{t}$ is continuous there.
Since
$$h^{-1}_k(T_{\hat{\ve}(k)})=\tilde{T}_{\hat{\ve}(k)}\text{ and }\operatorname{diam} \tilde{T}_{\hat{\ve}(k)}\to 0
$$
it is not difficult to see that for every $a\in l_x:=\bigcap_{k=1}^{\infty}T_{\hat{\ve}(k)}$ we can define
$\tilde{t}(a)=x$ and now $\tilde{t}$ is continuous everywhere. For $x\in C_B^T$ we have
$\tilde{t}(\tilde{h}(x))=\tilde{t}(x)=x$ and hence $\tilde{t}$ is a generalized inverse to $\tilde{h}$.
\end{proof}

\subsection{Counterexample in Theorem \ref{secondthm}}

\begin{proof}[Construction of the counterexample in Theorem \ref{secondthm}]
Again we use the same sequences
$$
\alpha_k = \frac{1}{2}\left(1+2^{-k\beta}\right)\text{ and }\beta_k = 2^{-k \beta}\text{ with }\beta\geq n+1
$$
to define Cantor type sets $C_A$, $C_B$ and $C_B^T$. As in the proof of Theorem \ref{firstthm} we have the estimates of the derivatives \eqref{dg-1} and \eqref{dg-2}.
We set
\eqn{defdeltak2}
$$
\tilde{\delta}_k=\frac{2^{-k\beta (2n-1)}}{k^2}.
$$
Given this $\tilde{\delta}_k$ we find $d_k>b_k>0$ so that we have Theorem \ref{thmtildeH2}.

Consider the mapping $\tilde{f}$ as a pointwise limit of
$$
\f_k(y) = g_k^{-1} \circ L^{-1} \circ \tilde{h}_k \circ L \circ g_k(y)
$$
almost everywhere (see Fig.~\ref{pic:the_mapping-1}).
For $y\in C_A$ we know that $L \circ g(y)\in C_B^T$ where $\tilde{h}_k(x)=x$ and hence it is easy to see that the pointwise limit is equal to $\f(y)=y$ for $y\in C_A$.
Therefore, we see at once that the pointwise limit of
$\f_k$ is
$$
\f(y)=g^{-1} \circ L^{-1} \circ \tilde{h} \circ L \circ g(y)\text{ everywhere}.
$$
Since $g$ and $L$ are homeomorphisms and $\tilde{h}$ is one-to-one we obtain that $\f$ is one-to-one on $Q(0,1)$.
It is not difficult to see that $\f$ is locally bilipschitz on
$[-1,1]^n\setminus C_A$ and hence we can use the composition formula for derivatives to obtain
$$
J_{\f}(y)=J_{g^{-1}}J_{L^{-1}} J_{\tilde{h}}J_L J_g >0\text{ for a.e. }x\in[-1,1]^n\setminus C_A.
$$
For $y\in C_A$ we know that $\f(y)=y$ and hence $J_{\f}=1$ for a.e. $x\in C_A$ once we show that $\f\in W^{1,1}$ since the weak derivative is equal to the approximative derivative a.e.

With the help of  Theorem \ref{thmtildeH2} we obtain that the continuous mapping
$$
w(y)=g^{-1} \circ L^{-1} \circ \tilde{t} \circ L \circ g(y)
$$
is a generalized inverse to $\f$. Moreover, for every $y\in C_A$ we know that $x=L \circ g(y)\in C_B^T$.
Therefore, the standard arguments show that for $\tilde{l}_x=(L\circ g)^{-1}(l_x)$ we have by \eqref{ppp}
$$
w(\tilde{l}_x)=g^{-1} \circ L^{-1} \circ \tilde{t} \circ L \circ g(\tilde{l}_x)
=g^{-1} \circ L^{-1} \circ \tilde{t}(l_x)=
g^{-1} \circ L^{-1}(x)=y.
$$
Now $\tilde{l}_x$ is a continuum and so is $w^{-1}(y)$ for every $y\in C_A$.

\begin{figure}[h]
\begin{center}
\begin{tikzpicture}[line cap=round,line join=round,>=triangle 45,x=0.36cm,y=0.36cm]
\clip(-26,-4) rectangle (8,18);

\draw (-25,17)-- (-25,9)-- (-17,9)-- (-17,17) -- cycle;
\draw (-24.5,13.5) -- (-24.5,16.5) -- (-21.5,16.5) -- (-21.5,13.5) -- cycle;
\draw (-21.5,15.5) -- (-20.7,13.3) -- (-20.7,13.1) -- (-21.5,14.5) -- cycle;
\fill[fill=black,fill opacity=0.1] (-21.5,15.5) -- (-20.7,13.3) -- (-20.7,13.1) -- (-21.5,14.5) -- cycle;
\draw (-20.5,13.5) -- (-20.5,16.5) -- (-17.5,16.5) -- (-17.5,13.5) -- cycle;
\draw (-17.5,14.5) -- (-17.2,15) -- (-17.2,16.7) -- (-18.8,16.7) -- (-18.7,16.8) -- (-18.9,16.8) -- (-19.2,16.6) -- (-17.4,16.6) -- (-17.4,15.8) -- (-17.5,15.5) -- cycle;
\fill[fill=black,fill opacity=0.1] (-17.5,14.5) -- (-17.2,15) -- (-17.2,16.7) -- (-18.8,16.7) -- (-18.7,16.8) -- (-18.9,16.8) -- (-19.2,16.6) -- (-17.4,16.6) -- (-17.4,15.8) -- (-17.5,15.5) -- cycle;
\draw (-20.5,9.5) -- (-20.5,12.5) -- (-17.5,12.5) -- (-17.5,9.5) -- cycle;
\draw (-17.5,11.5) -- (-17.2,11) -- (-17.2,9.3) -- (-18.8,9.3) -- (-18.7,9.2) -- (-18.9,9.2) -- (-19.2,9.4) -- (-17.4,9.4) -- (-17.4,10.6) -- (-17.5,10.5) -- cycle;
\fill[fill=black,fill opacity=0.1] (-17.5,11.5) -- (-17.2,11) -- (-17.2,9.3) -- (-18.8,9.3) -- (-18.7,9.2) -- (-18.9,9.2) -- (-19.2,9.4) -- (-17.4,9.4) -- (-17.4,10.6) -- (-17.5,10.5) -- cycle;
\draw (-24.5,9.5) -- (-24.5,12.5) -- (-21.5,12.5) -- (-21.5,9.5) -- cycle;
\draw (-21.5,10.5) -- (-20.7,12.7) -- (-20.7,12.9) -- (-21.5,11.5) -- cycle;
\fill[fill=black,fill opacity=0.1] (-21.5,10.5) -- (-20.7,12.7) -- (-20.7,12.9) -- (-21.5,11.5) -- cycle;

\draw (-13,17)-- (-13,9)-- (-5,9)-- (-5,17) -- cycle;
\draw (-11.3,15.3) -- (-11.3,14.7) -- (-10.7,14.7) -- (-10.7,15.3) -- cycle;
\draw (-10.7,14.9) -- (-7.8,13.9) -- (-7.8,14.1) -- (-10.7,15.1) -- cycle;
\fill[fill=black,fill opacity=0.1] (-10.7,14.9) -- (-7.8,13.9) -- (-7.8,14.1) -- (-10.7,15.1) -- cycle;
\draw (-6.7,15.3) -- (-6.7,14.7) -- (-7.3,14.7) -- (-7.3,15.3) -- cycle;
\draw (-6.7,15.1) -- (-6.6,15.3) -- (-7.5,16.1) -- (-7.8,16.3) -- (-7.3,16.6) -- (-7.3,16.5) -- (-7.5,16.3) -- (-6.4,15.3) -- (-6.4,15.2) -- (-6.7,14.9) -- cycle;
\fill[fill=black,fill opacity=0.1] (-6.7,15.1) -- (-6.6,15.3) -- (-7.5,16.1) -- (-7.8,16.3) -- (-7.3,16.6) -- (-7.3,16.5) -- (-7.5,16.3) -- (-6.4,15.3) -- (-6.4,15.2) -- (-6.7,14.9) -- cycle;
\draw (-6.7,10.7) -- (-6.7,11.3) -- (-7.3,11.3) -- (-7.3,10.7) -- cycle;
\draw (-6.7,10.9) -- (-6.6,10.7) -- (-7.5,9.9) -- (-7.8,9.7) -- (-7.3,9.4) -- (-7.3,9.5) -- (-7.5,9.7) -- (-6.4,10.7) -- (-6.4,10.8) -- (-6.7,11.1) -- cycle;
\fill[fill=black,fill opacity=0.1] (-6.7,10.9) -- (-6.6,10.7) -- (-7.5,9.9) -- (-7.8,9.7) -- (-7.3,9.4) -- (-7.3,9.5) -- (-7.5,9.7) -- (-6.4,10.7) -- (-6.4,10.8) -- (-6.7,11.1) -- cycle;
\draw (-11.3,10.7) -- (-11.3,11.3) -- (-10.7,11.3) -- (-10.7,10.7) -- cycle;
\draw (-10.7,11.1) -- (-7.8,12.1) -- (-7.8,11.9) -- (-10.7,10.9) -- cycle;
\fill[fill=black,fill opacity=0.1] (-10.7,11.1) -- (-7.8,12.1) -- (-7.8,11.9) -- (-10.7,10.9) -- cycle;

\draw (-1,17)-- (-1,9)-- (7,9)-- (7,17) -- cycle;
\draw (3.3,15.7) -- (2.7,15.7) -- (2.7,16.3) -- (3.3,16.3) -- cycle;
\draw (3.3,16.1) -- (3.7,16.1) -- (3.7,15.9) -- (3.3,15.9) -- cycle;
\fill[fill=black,fill opacity=0.1] (3.3,16.1) -- (3.7,16.1) -- (3.7,15.9) -- (3.3,15.9) -- cycle;
\draw (2.7,14.3) -- (3.3,14.3) -- (3.3,13.7) -- (2.7,13.7) -- cycle;
\draw (3.3,13.9) -- (3.7,13.9) -- (3.7,14.1) -- (3.3,14.1) -- cycle;
\fill[fill=black,fill opacity=0.1] (3.3,13.9) -- (3.7,13.9) -- (3.7,14.1) -- (3.3,14.1) -- cycle;
\draw (3.3,11.7) -- (2.7,11.7) -- (2.7,12.3) -- (3.3,12.3) -- cycle;
\draw (3.3,12.1) -- (3.7,12.1) -- (3.7,11.9) -- (3.3,11.9) -- cycle;
\fill[fill=black,fill opacity=0.1] (3.3,12.1) -- (3.7,12.1) -- (3.7,11.9) -- (3.3,11.9) -- cycle;
\draw (2.7,10.3) -- (3.3,10.3) -- (3.3,9.7) -- (2.7,9.7) -- cycle;
\draw (3.3,9.9) -- (3.7,9.9) -- (3.7,10.1) -- (3.3,10.1) -- cycle;
\fill[fill=black,fill opacity=0.1] (3.3,9.9) -- (3.7,9.9) -- (3.7,10.1) -- (3.3,10.1) -- cycle;

\draw (-1,-3) -- (-1,5) -- (7,5) -- (7,-3) -- cycle;
\draw (2.7,3.7)-- (3.3,3.7)-- (3.3,4.3) -- (2.7,4.3)-- cycle;
\draw (3.3,3.9)-- (6.3,3.9)-- (6.3,4.1)-- (3.3,4.1) -- cycle;
\fill[fill=black,fill opacity=0.1] (3.3,3.9)-- (6.3,3.9)-- (6.3,4.1)-- (3.3,4.1) -- cycle;
\draw (3.3,2.3) -- (2.7,2.3) -- (2.7,1.7) -- (3.3,1.7) -- cycle;
\draw (3.3,2.1) -- (6.3,2.1) -- (6.3,1.9) -- (3.3,1.9) -- cycle;
\fill[fill=black,fill opacity=0.1] (3.3,2.1) -- (6.3,2.1) -- (6.3,1.9) -- (3.3,1.9) -- cycle;
\draw (2.7,-0.3) -- (3.3,-0.3) -- (3.3,0.3) -- (2.7,0.3) -- cycle;
\draw (3.3,-0.1) -- (6.3,-0.1) -- (6.3,0.1) -- (3.3,0.1) -- cycle;
\fill[fill=black,fill opacity=0.1] (3.3,-0.1) -- (6.3,-0.1) -- (6.3,0.1) -- (3.3,0.1) -- cycle;
\draw (3.3,-1.7) -- (2.7,-1.7) -- (2.7,-2.3) -- (3.3,-2.3) -- cycle;
\draw (3.3,-1.9) -- (6.3,-1.9) -- (6.3,-2.1) -- (3.3,-2.1) -- cycle;
\fill[fill=black,fill opacity=0.1] (3.3,-1.9) -- (6.3,-1.9) -- (6.3,-2.1) -- (3.3,-2.1) -- cycle;

\draw (-13,-3) -- (-13,5) -- (-5,5) -- (-5,-3) -- cycle;
\draw (-11.3,3.3) -- (-11.3,2.7) -- (-10.7,2.7) -- (-10.7,3.3) -- cycle;
\draw (-10.7,3.1) -- (-8.8,2.6) -- (-5.4,2.1) -- (-5.4,1.9) -- (-9.2,2.5) -- (-10.7,2.9) -- cycle;
\fill[fill=black,fill opacity=0.1] (-10.7,3.1) -- (-8.8,2.6) -- (-5.4,2.1) -- (-5.4,1.9) -- (-9.2,2.5) -- (-10.7,2.9) -- cycle;
\draw (-6.7,3.3) -- (-6.7,2.7) -- (-7.3,2.7) -- (-7.3,3.3) -- cycle;
\draw (-6.7,3.1) -- (-5.4,4.1) -- (-5.4,3.9) -- (-6.7,2.9) -- cycle;
\fill[fill=black,fill opacity=0.1] (-6.7,3.1) -- (-5.4,4.1) -- (-5.4,3.9) -- (-6.7,2.9) -- cycle;
\draw (-6.7,-1.3) -- (-6.7,-0.7) -- (-7.3,-0.7) -- (-7.3,-1.3) -- cycle;
\draw (-6.7,-1.1) -- (-5.4,-2.1) -- (-5.4,-1.9) -- (-6.7,-0.9) -- cycle;
\fill[fill=black,fill opacity=0.1] (-6.7,-1.1) -- (-5.4,-2.1) -- (-5.4,-1.9) -- (-6.7,-0.9) -- cycle;
\draw (-11.3,-1.3) -- (-11.3,-0.7) -- (-10.7,-0.7) -- (-10.7,-1.3) -- cycle;
\draw (-10.7,-1.1) -- (-8.8,-0.6) -- (-5.4,-0.1) -- (-5.4,0.1) -- (-9.2,-0.5) -- (-10.7,-0.9) -- cycle;
\fill[fill=black,fill opacity=0.1] (-10.7,-1.1) -- (-8.8,-0.6) -- (-5.4,-0.1) -- (-5.4,0.1) -- (-9.2,-0.5) -- (-10.7,-0.9) -- cycle;

\draw (-25,-3) -- (-25,5) -- (-17,5) -- (-17,-3) -- cycle;
\draw (-24.5,4.5)-- (-24.5,1.5)-- (-21.5,1.5)-- (-21.5,4.5) -- cycle;
\draw (-21.5,3.5) -- (-21,2.5) -- (-20.5,1.3) -- (-17.5,1.3) -- (-17.1,2.1) -- (-17.1,1.9) -- (-17.4,1.2) -- (-20.6,1.1) -- (-21.5,2.5) -- cycle;
\fill[fill=black,fill opacity=0.1] (-21.5,3.5) -- (-21,2.5) -- (-20.5,1.3) -- (-17.5,1.3) -- (-17.1,2.1) -- (-17.1,1.9) -- (-17.4,1.2) -- (-20.6,1.1) -- (-21.5,2.5) -- cycle;
\draw (-20.5,4.5)-- (-20.5,1.5)-- (-17.5,1.5)-- (-17.5,4.5) -- cycle;
\draw (-17.5,3.5) -- (-17.1,4.6) -- (-17.1,4.4) -- (-17.5,2.5) -- cycle;
\fill[fill=black,fill opacity=0.1] (-17.5,3.5) -- (-17.1,4.6) -- (-17.1,4.4) -- (-17.5,2.5) -- cycle;
\draw (-20.5,0.5)-- (-20.5,-2.5)-- (-17.5,-2.5)-- (-17.5,0.5) -- cycle;
\draw (-17.5,-1.5) -- (-17.1,-2.6) -- (-17.1,-2.4) -- (-17.5,-0.5) -- cycle;
\fill[fill=black,fill opacity=0.1] (-17.5,-1.5) -- (-17.1,-2.6) -- (-17.1,-2.4) -- (-17.5,-0.5) -- cycle;
\draw (-24.5,0.5)-- (-24.5,-2.5)-- (-21.5,-2.5)-- (-21.5,0.5) -- cycle;
\draw (-21.5,-1.5) -- (-21,-0.5) -- (-20.5,0.7) -- (-17.5,0.7) -- (-17.1,-0.1) -- (-17.1,0.1) -- (-17.4,0.8) -- (-20.6,0.9) -- (-21.5,-0.5) -- cycle;
\fill[fill=black,fill opacity=0.1] (-21.5,-1.5) -- (-21,-0.5) -- (-20.5,0.7) -- (-17.5,0.7) -- (-17.1,-0.1) -- (-17.1,0.1) -- (-17.4,0.8) -- (-20.6,0.9) -- (-21.5,-0.5) -- cycle;

\draw [->] (-16.3,13) -- (-13.7,13);
\draw [->] (-4.3,13) -- (-1.7,13);
\draw [->] (-1.7,1) -- (-4.3,1);
\draw [->] (3,8.5) -- (3,5.7);
\draw [->] (-13.7,1) -- (-16.3,1);
\begin{scriptsize}
\draw[color=black] (-15,13.7) node {$g_k$};
\draw[color=black] (-3,13.7) node {$L$};
\draw[color=black] (3.8,7.2) node {$h^{-1}_k$};
\draw[color=black] (-2.7,1.7) node {$L^{-1}$};
\draw[color=black] (-14.7,1.7) node {$g_k^{-1}$};
\end{scriptsize}
\end{tikzpicture}
\end{center}
\caption{Mapping $\f$.}
\label{pic:the_mapping-1}
\end{figure}

By Theorem \ref{thmtildeH2} we know that $\tilde{h}_{k-1}=\tilde{h}_k$ for every $y$ with
$L(g_k(y))\notin \bigcup_{\hat{\ve}(k)\in\hat{\mathbb{V}}^{k}} \tilde{P}_{\hat{\ve}(k)}'$ and
$g_{k-1}(y)=g_{k}(y)$ for $y\notin \bigcup_{\ve(k-1)\in\mathbb{V}^{k-1}} Q_{\ve(k-1)}$ by \eqref{star}.
In view of  \eqref{goodmap} it follows that
$$
\f_k(y)=\f_{k-1}(y)\text{ for }L(g_k(y))\notin \bigcup_{\hat{\ve}(k)\in\hat{\mathbb{V}}^{k}} \tilde{P}_{\hat{\ve}(k)}'=:\tilde{M}_k\text{ and }y\notin \bigcup_{\ve(k-1)\in\mathbb{V}^{k-1}} Q_{\ve(k-1)}.
$$
Note that for $x\in \bigcup_{\hat{\ve}(k-1)\in\hat{\mV}^{k-1}} \hat{Q}_{\hat{\ve}(k-1)}\setminus \tilde{M}_k$ we have by Theorem
\ref{thmtildeH2}
$$
\tilde{h}_{k}(x)=\tilde{h}_{k-1}(x)=x.
$$
In view of $g_k(Q_{\ve(k-1)})=g_{k-1}(Q_{\ve(k-1)})=\tilde{Q}_{\ve(k-1)}$  and
$L(\tilde{Q}_{\ve(k-1)})=\hat{Q}_{\hat{\ve}(k-1)}$
we obtain by Theorem \ref{Bilip}  for $y\in Q_{\ve(k-1)}\setminus g_k^{-1}(L^{-1}(\tilde{M}_k))$ that
$$
\tilde{f}_{k-1}(y)=g_{k-1}^{-1} \circ L^{-1} \circ x \circ L \circ g_{k-1}(y)=y\text{ and similarly }
\tilde{f}_{k}(y)=y.
$$
Therefore
\eqn{prvni2}
$$
\begin{aligned}
\int_{Q(0,1)}|D\f_{k}-D\f_{k-1}|^{n-1}&=\int_{g_k^{-1}(L^{-1}(\tilde{M}_k))}|D\f_{k}-D\f_{k-1}|^{n-1}\\
&\leq C \int_{g_k^{-1}(L^{-1}(\tilde{M}_k))}|D\f_{k}|^{n-1}+
C\int_{g_k^{-1}(L^{-1}(\tilde{M}_{k}))}|D\f_{k-1}|^{n-1}. \\
\end{aligned}
$$


Note that $\f_k$ is bilipschitz (as a composition of bilipschitz mappings) and hence we can compute its derivative a.e.\ by the composition of derivatives. With the help of \eqref{dl} we get
$$
\begin{aligned}
|D\f_k(y)|&\leq |Dg_k^{-1}|\cdot |DL^{-1}|\cdot |D\tilde{h}_k|\cdot |DL|\cdot |Dg_k|\\
&\leq C \bigl|Dg^{-1}_k(L^{-1}\circ \tilde{h}_k\circ L\circ g_k(y))\bigr|\cdot \bigl|D\tilde{h}_k(L\circ g_k(y)))\bigr|.\\
\end{aligned}
$$
By the change of variables
\eqn{ahoj2}
$$
\begin{aligned}
\int_{g_k^{-1}(L^{-1}(\tilde{M}_k))} &|D\f_k(y)|^{n-1}\; dy \leq
C\int_{g_k^{-1}(L^{-1}(\tilde{M}_k))} |Dg_k^{-1}|^{n-1} |D\tilde{h}_k|^{n-1}\frac{J_{L}}{J_{L}} \frac{J_{g_k}}{J_{g_k}} \; dy\\
&\leq C\int_{\tilde{M}_k} |Dg_k^{-1}(L^{-1}\circ \tilde{h}_k(x))|^{n-1} |D\tilde{h}_k(x)|^{n-1}\frac{1}{J_{g_k}((L\circ g_k)^{-1}(x))}\; dx.\\
\end{aligned}
$$

Note that for every $x\in \tilde{P}_{\hat{\ve}(k)}'\subset \tilde{M}_k$ we know that $L^{-1}\circ \tilde{h}_k(x)$ lies outside of $\bigcup_{\ve(k)\in\mV^k} \tilde{Q}_{\ve(k)}$ and hence we can use \eqref{eq:Dg2} to estimate
$$
|Dg_k^{-1}(L^{-1}\circ \tilde{h}_k(x))|\leq
C\max_{i=1,\hdots,k} 2^{\beta i}=C 2^{\beta k}
$$
and
$$
\frac{1}{J_{g_k}((L\circ g_k)^{-1}(x))}\leq C 2^{\beta k n}.
$$
Now \eqref{defdeltak2} and \eqref{ahoj2} imply that
$$
\int_{g_k^{-1}(L^{-1}(\tilde{M}_k))} |D\f_k(y)|^{n-1}\; dy \leq
C2^{k\beta(2n-1)}\int_{\tilde{M}_k} |D\tilde{h}_k(x)|^{n-1}\; dx\leq \frac{C}{k^2}.
$$
The similar estimate holds also for $D\f_{k-1}$ and hence \eqref{prvni2} implies that
$$
\int_{Q(0,1)}|D\f_{k}-D\f_{k-1}|^{n-1}\leq \frac{C}{k^2}.
$$
Since $1/k^2$ is a convergent series, $f_k$ form a Cauchy sequence in $W^{1,n-1}$ and hence $f\in W^{1,n-1}$.
\end{proof}

\prt{Example}
\begin{proclaim}\label{last}
For every $n\geq 2$ there is a set $C_A$ of Hausdorff dimension $n$ and a Lipschitz mapping $f_L\colon[-1,1]^n\to [-1,1]^n$ with $J_{f_L}>0$ a.e.\ which is a strong limit of Sobolev homeomorphisms $f_k\in W^{1,n-1}([-1,1]^n,\rn)$ with $f_k(x)=x$ for $x\in\partial[-1,1]^n$ such that
$$
f(C_A)\text{ is a point}.
$$
\end{proclaim}
\begin{proof}
We only briefly sketch the construction. We set $\alpha_k=\frac{1}{k}$ in the construction of a Cantor type set $C_A$ (see Section~\ref{ssec:CS}). Then it is easy to see that the measure of $C_A$ is zero but its Hausdorff dimension is $n$. We map this by $g$ from Section~\ref{sec:map_CS} to a Cantor type set $C_B$ given by sequence $\beta_k= 2^{-\beta k}$, $\beta\geq n+1$, as usual. Note that by \eqref{eq:Dg},
$$
\frac{\beta_k}{\alpha_k}\leq C\text{ and }\frac{\beta_{k-1} -\beta_{k}}{\alpha_{k-1} - \alpha_{k}}\leq C
$$
we obtain that $g$ is a Lipschitz mapping.

Then we map $C_B$ by the Lipschitz mapping $L$ from Theorem \ref{Bilip} to the Cantor tower $C_B^T$.
Now $C_B^T\subset \{0\}^{n-1}\times(-1,1)$ and it is easy to find a Lipschitz mapping $S$ which squeezes a segment containing $C_B^T$ to a single point, it is one-to-one outside of this segment and equals to identity on $\partial [-1,1]^n$.
We can choose $S$ to be
$$
S(x)=\Bigl[x_1,x_2,\hdots,x_{n-1},x_n\sqrt{x_1^2+\hdots+x_{n-1}^2}\Bigr]
$$
on $Q(0,1-\delta)$ (fix $\delta>0$ so that $C_B^T\subset Q(0,1-\delta)$) and extend it in a Lipschitz way so that $S(x)=x$ on $\partial Q(0,1)$.
Finally the mapping $f_L:=S\circ L\circ g$ is a mapping for which
$$
f_L(C_A)=S(C_B^T)\text{ is a point }
$$
and we can obtain it as a weak limit of homeomorphisms in $W^{1,\infty}$ (or even strong limit in $W^{1,p}$ for any $p<\infty$).
\end{proof}





\section{Positive statements: the case $p>n-1$}

To study the injectivity a.e.\ with respect to the image we define slightly better (INV) condition, see Corollary \ref{beterRepresentativeIsBetter} below. We need the following generalization of \cite[Lemma~7.3]{MS} for the case with no additional assumptions on $J_f$.

\begin{lemma}\label{telescopic}
Let $f\in W^{1,p}(\Omega,\R^n)$, $p>n-1$, be a weak limit of homeomorphisms $f_k$ in $W^{1,p}(\Omega,\R^n)$, and $a$, $b\in \Omega$.
Then there exist $\cL^1$-null sets $N_a$ and $N_b$ such that for every $r\in (0,r_a)\setminus N_a$ and $s\in (0,r_b)\setminus N_b$ \textrm{(}where $r_x:=\dist(x,\partial\Omega)$\textrm{)} the following holds:
\begin{enumerate}[(i)]
\item If $B(a,r)\subset B(b,s)$, then
$$
E(f^\ast,B(a,r))\subset E(f^\ast,B(b,s)).
$$
\item If $B(a,r) \cap B(b,s) = \varnothing$, then
$$
{f^\ast}^T( B(a,r)) \cap {f^\ast}^T( B(b,s)) = \emptyset.
$$
\end{enumerate}
\end{lemma}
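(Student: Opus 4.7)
The idea is to reduce to the approximating homeomorphisms $f_k$, where both conclusions are trivial consequences of injectivity and \eqref{monotone}, and to pass to the limit using Brouwer-degree stability under uniform convergence. By Lemma \ref{lem:MS2.9} there exist $\cL^1$-null sets $N_a,N_b$ such that for each admissible pair $(r,s)$ one can diagonalise to extract a subsequence (not relabeled) with $f_k^\ast\rightrightarrows f^\ast$ on both $S(a,r)$ and $S(b,s)$. The workhorse is the following consequence of \eqref{stability}: if continuous maps $g_k\rightrightarrows g$ on a sphere $S$ and $y_k\to y$ with $y\notin g(S)$, then for all sufficiently large $k$ one has $y_k\notin g_k(S)$ and $\deg(g_k,S,y_k)=\deg(g,S,y)$ (a straightforward linear-homotopy argument, since $\mathrm{dist}(y,g(S))>0$). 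For each homeomorphism $f_k$, \eqref{monotone} gives $E(f_k,B(a,r))=f_k(\overline{B(a,r)})$ and $f_k^T(B(a,r))=f_k(B(a,r))$, so injectivity immediately produces $E(f_k,B(a,r))\subset E(f_k,B(b,s))$ in the setting of (i) and $f_k^T(B(a,r))\cap f_k^T(B(b,s))=\emptyset$ in the setting of (ii).

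For part (i) take $y\in E(f^\ast,B(a,r))$ and assume $y\notin f^\ast(S(b,s))$ (otherwise there is nothing to prove). If in addition $y\notin f^\ast(S(a,r))$, then $\deg(f^\ast,S(a,r),y)\neq 0$; stability on $S(a,r)$ applied to the constant sequence $y$ gives $\deg(f_k,S(a,r),y)\neq 0$ for large $k$, hence by \eqref{monotone} $y\in f_k(B(a,r))\subset f_k(B(b,s))$, so $\deg(f_k,S(b,s),y)\neq 0$, and stability on $S(b,s)$ yields $\deg(f^\ast,S(b,s),y)\neq 0$. If instead $y=f^\ast(x_0)$ for some $x_0\in S(a,r)$, observe that $y\notin f^\ast(S(b,s))$ forces $x_0\notin S(b,s)$; together with $S(a,r)\subset\overline{B(b,s)}$ (from $B(a,r)\subset B(b,s)$) this yields $x_0\in B(b,s)$. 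Setting $y_k:=f_k^\ast(x_0)\to y$ by uniform convergence, we get $y_k=f_k(x_0)\in f_k(B(b,s))$ and $y_k\notin f_k(S(b,s))$ by injectivity, so $\deg(f_k,S(b,s),y_k)\neq 0$ by \eqref{monotone}; degree stability (applicable since $y\notin f^\ast(S(b,s))$) then yields $\deg(f^\ast,S(b,s),y)\neq 0$, so $y\in E(f^\ast,B(b,s))$. For part (ii), any $y\in f^{\ast T}(B(a,r))\cap f^{\ast T}(B(b,s))$ avoids both sphere images and has nonzero degree on each; stability transfers both conditions to $f_k$ for large $k$, giving $y\in f_k(B(a,r))\cap f_k(B(b,s))=\emptyset$ by injectivity, a contradiction.

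The main obstacle is the subcase $y\in f^\ast(S(a,r))$ of (i): here the degree on $S(a,r)$ is undefined and one cannot directly transfer a degree condition back to $f_k$. The remedy above is to use the explicit approximants $y_k=f_k^\ast(x_0)\to y$, which land in $f_k(B(b,s))$ automatically because $f_k$ is a homeomorphism and $x_0\in B(b,s)$, the latter being a simple consequence of $y\notin f^\ast(S(b,s))$. This local analysis is precisely what substitutes for the hypothesis $J_f>0$ a.e.\ required in \cite[Lemma~7.3]{MS}; no orientation/sign information ever enters the argument.
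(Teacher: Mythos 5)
Your proof is correct and follows the same strategy as the paper's: reduce to the approximating homeomorphisms on good spheres via Lemma~\ref{lem:MS2.9}, split part (i) into the subcases $y\in f^\ast(S(a,r))$ and $y\notin f^\ast(S(a,r))$, and transfer the nonvanishing of the degree to the limit via stability under uniform convergence together with \eqref{monotone}, with (ii) handled by the same contradiction. Your ``workhorse'' with a moving target $y_k\to y$ merely repackages the paper's two-step argument (first equating $\deg(f_k,S(b,s),y)$ with $\deg(f_k,S(b,s),f_k(x))$, then applying \eqref{stability} with the fixed target $y$), so the underlying mechanism is identical.
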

\begin{proof}
We may assume that $f$ equals to the representative $f^\ast$.
By Lemma~\ref{lem:MS2.9},
there are $\cL^1$-null sets $N_a$ and $N_b$ such that for every
$r\in (0,r_a)\setminus N_a$ and $s\in (0,r_b)\setminus N_b$
one has $f_k\to f$ (up to subsequence) uniformly on $S(a,r)$ and $S(b,s)$.

To establish (i), we show that $\deg(f,S(b,s),y) \neq 0$
for $y\in E(f,B(a,r))\setminus f(S(b,s))$.
Let us firstly suppose that $y=f(x)$ for $x\in S(a,r)$.
Since $f(S(b,s))$ is compact and $f_k$ converge uniformly
on the sphere $S(b,s)$ there exist $\epsilon>0$ and $k_0\in\mathbb{N}$ such that for every $k>k_0$ we have $B(y,\epsilon)\cap f_k(S(b,s))=\varnothing$.
Moreover, $x\in S(a,r)$ yields $y=\lim_{k\to\infty} f_k(x)$,
and we may assume that $f_k(x)\in B(y,\epsilon)$ for all big enough $k$. 
Therefore,
\begin{equation}\label{spojitostProBod}
\deg(f_k,S(b,s),y) = \deg(f_k,S(b,s),f_k(x))
\end{equation}
for $k>k_0$.
Then the continuity of the degree under uniform convergence \eqref{stability}
 yields
\begin{equation}\label{spojitostProFunkce}
\deg(f,S(b,s),y) = \lim_{k\to\infty} \deg(f_k,S(b,s),y).
\end{equation}
Because $f_k$ are homeomorphisms and $x\in (S(a,r)\setminus S(b,r)) \subset B(b,s)$ we obtain, that
$$
 \deg(f_k,S(b,s),f_k(x))\neq 0.
$$
Hence, \eqref{spojitostProBod} and \eqref{spojitostProFunkce}, as well as the fact that the degree is integer valued, give
\begin{align*}
	\deg(f,S(b,s),y) &= \lim_{k\to\infty} \deg(f_k,S(b,s),y) \\
	&= \lim_{k\to\infty} \deg(f_k,S(b,s),f_k(x)) \neq 0.
\end{align*}

It remains to prove the case when $y\notin f(S(a,r))$ (so $\deg(f,S(a,r),y)\neq 0$).
As before, the uniform convergence on spheres $S(a,r)$ and $S(b,s)$ and the continuity of the degree ensure
\begin{align*}
\deg(f,S(a,r),y) &= \lim_{m\to\infty} \deg(f_m,S(a,r),y) =\deg(f_{k},S(a,r),y),\\
\deg(f,S(b,s),y) &= \lim_{m\to\infty} \deg(f_m,S(b,s),y) =\deg(f_{k},S(b,s),y),
\end{align*}
for some big $k\in\mN$.
Since $y\in E(f,B(a,r))\setminus f(S(a,r))$, we have $\deg(f,S(a,r),y) \neq 0$, and so $\deg(f_k,S(a,r),y)\neq 0$.
Further, $f_k$ is a homeomorphism and $B(a,r)\subset B(b,s)$, therefore $\deg(f_k,S(a,r),y)\neq 0$ implies $\deg(f_k,S(b,s),y)\neq 0$ by \eqref{monotone}.
So $\deg(f,S(b,s),y)\neq 0$ and this completes the proof of (i).

To prove (ii) we assume, on the contrary, that $y\in f^T( B(a,r)) \cap f^T( B(b,s))$.
Then the uniform convergence and continuity of the degree ensure that there is $k\in\mN$
\begin{align*}
0\neq \deg(f,S(a,r),y) &= \lim_{m\to\infty} \deg(f_m,S(a,r),y)=\deg(f_k,S(a,r),y),\\
0\neq \deg(f,S(b,s),y) &= \lim_{m\to\infty} \deg(f_m,S(b,s),y)=\deg(f_k,S(b,s),y).
\end{align*}
Since $f_k$ is a homeomorphism, $\deg(f_k,S(a,r),y)$ and $\deg(f_k,S(b,s),y)$ cannot both differ from zero, which is a contradiction.

\end{proof}

Based on Lemma~\ref{telescopic} we follow \cite{MS} and \cite{Sv} to define the set-valued image
\begin{align*}
	f^T(a) := \bigcap_{r>0,r\notin N_a} E(f^\ast, B(a,r)).
\end{align*}
Note that $f^T(a)$ is non-empty and compact, as an intersection of a decreasing sequence of non-empty compact sets.

\begin{thm}\label{beterRepresentative}
Let $f$ be a weak limit of homeomorphisms $f_k$ in  $W^{1,p}(\Omega,\R^n)$, $p>n-1$ for $n>2$ or $p\geq 1$ for $n=2$.
Then there exists an $\cH^{n-p}$ null set $NC\subset \Omega$ and a representative $\hat{f}$ of $f$ such that $\hat{f}$ is continuous at every $x\in\Omega\setminus NC$.
Furthermore $f^T(x)$ is a singletone for every $x\in \Omega\setminus NC$, $\hat{f}=f^\ast$ $\capacity_p$-a.e.\ and $\hat{f}$ can be chosen so that $\hat{f}(x)\in f^T(x)$ for every $x\in\Omega$.
\end{thm}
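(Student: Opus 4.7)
The plan is to define $NC$ as the set of discontinuity points of the $p$-quasicontinuous representative $f^\ast$ (a superset of the non-Lebesgue set from Theorem \ref{repres}); by standard Sobolev regularity for $p>n-1$ this set has $\cH^{n-p}$ measure zero and $p$-capacity zero. On $\Omega\setminus NC$ I would set $\hat f(x)$ to be the unique element of $f^T(x)$, and extend arbitrarily on $NC$. The three tasks are to show $f^T(x)$ is a singleton off $NC$, to identify this singleton with $f^\ast(x)$, and to upgrade singleton-ness to continuity.

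\textbf{Singleton and identification.} By Lemma \ref{telescopic}(i) the sets $\{E(f^\ast,B(x,r))\}$ are monotone decreasing (over admissible $r$) and each is compact, so $f^T(x)=\bigcap_{r\notin N_x} E(f^\ast,B(x,r))$ is a nonempty compact set. The topological degree vanishes on the unbounded component of $\rn\setminus f^\ast(S(x,r))$, giving the standard degree-theoretic bound
\[
\diam E(f^\ast,B(x,r))\le 2\diam f^\ast(S(x,r)).
\]
For $x\in\Omega\setminus NC$ continuity of $f^\ast$ at $x$ yields $\diam f^\ast(S(x,r))\to 0$ as $r\to 0$, so the nested compacta collapse to a point. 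To identify this point with $f^\ast(x)$: homeomorphisms satisfy (INV) and the condition passes to weak limits for $p>n-1$ by \cite[Lemma 3.3]{MS}, so $f^\ast(y)\in E(f^\ast,B(x,r))$ for $\cL^n$-a.e.\ $y\in B(x,r)$; continuity of $f^\ast$ at $x$ and closedness of $E(f^\ast,B(x,r))$ then force $f^\ast(x)\in E(f^\ast,B(x,r))$ for every admissible $r$, hence $f^\ast(x)\in f^T(x)$. Setting $\hat f(x):=f^\ast(x)$ on $\Omega\setminus NC$ and picking $\hat f(x)\in f^T(x)$ arbitrarily on $NC$, one has $\hat f(x)\in f^T(x)$ everywhere and $\hat f=f^\ast$ off the $p$-capacity-zero set $NC$, so $\hat f$ is a valid precise representative.

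\textbf{Continuity and main obstacle.} Given $x_0\in\Omega\setminus NC$ and $\varepsilon>0$, the singleton property produces an admissible $r$ with $E(f^\ast,B(x_0,r))\subset B(\hat f(x_0),\varepsilon)$. For any $y\in B(x_0,r/2)$ and any admissible small $s$ with $B(y,s)\subset B(x_0,r)$, Lemma \ref{telescopic}(i) gives $f^T(y)\subset E(f^\ast,B(y,s))\subset E(f^\ast,B(x_0,r))$, so $\hat f(y)\in B(\hat f(x_0),\varepsilon)$, proving continuity of $\hat f$ at $x_0$. I expect the main obstacle to be the diameter-shrinking step: it requires the degree-theoretic bound $\diam E(f^\ast,B(x,r))\le 2\diam f^\ast(S(x,r))$, the Sobolev-Morrey continuity of $f^\ast$ at $\cH^{n-p}$-almost every point for $p>n-1$ (a classical but delicate regularity result), and the telescoping Lemma \ref{telescopic}(i) which permits intersecting over a single decreasing sequence of radii. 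The remaining identification and continuity arguments are essentially formal consequences of the nested-compacta scheme.
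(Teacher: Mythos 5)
Your proposal contains a genuine gap in the very first step: you define $NC$ as the set of topological discontinuity points of the quasicontinuous representative $f^\ast$ and assert that this set has $\cH^{n-p}$ measure zero and $p$-capacity zero ``by standard Sobolev regularity.'' For $n-1<p\leq n$ this is not a correct appeal. Theorem~\ref{repres} controls the set of \emph{non-Lebesgue} points of $f^\ast$, but a Lebesgue point is not the same as a point of continuity; the quasicontinuous representative of a $W^{1,p}$ function with $p\leq n$ may be (topologically) discontinuous on a set far larger than $\cH^{n-p}$-null. In fact, the assertion that there is a representative continuous off an $\cH^{n-p}$-null set is precisely the conclusion of this theorem, not a black box one can import. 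Your ``Singleton and identification'' step then uses continuity of $f^\ast$ at $x\notin NC$ to show $\diam f^\ast(S(x,r))\to 0$ — but the smallness of the set where this continuity fails is exactly what must be proved, so the argument is circular at this point.

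The correct route — and the one the paper follows by invoking \cite[Theorem 7.4]{MS}, with Lemma~\ref{telescopic} substituted for \cite[Lemma 7.3]{MS} to remove the $J_f\neq 0$ hypothesis — defines $NC:=\{x:\ \diam(f^T(x))>0\}$. One then shows $\cH^{n-p}(NC)=0$ by combining the degree bound $\diam f^T(x)\leq \liminf_{r\to 0,\,r\notin N_x}\diam f^\ast(S(x,r))$ with the Morrey inequality on spheres (here $p>n-1$ gives Hölder control of $f^\ast|_{S(x,r)}$ for a.e.\ $r$) and a fractional maximal function / capacity estimate that forces the liminf to vanish outside an $\cH^{n-p}$-null set. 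The key point is that the continuity established is that of the \emph{new} representative $\hat f$ built from $f^T$, not of $f^\ast$: $\hat f$ can be continuous at $x$ even where $f^\ast$ is not. Your identification step ($f^\ast(x)\in f^T(x)$) can be salvaged using only the Lebesgue-point property (which Theorem~\ref{repres} does supply), and your continuity argument for $\hat f$ from the singleton property is correct, but the singleton step must rest on the liminf-of-oscillation estimate rather than on a pointwise continuity hypothesis for $f^\ast$.
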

\begin{proof}
Assume $p>n-1$. The theorem follows from \cite[Theorem 7.4]{MS} considering the fact that the weak limits of homeomorphisms satisfy the (INV) condition and Lemma \ref{telescopic} instead of \cite[Lemma 7.3]{MS}. Note that the condition $J_f\neq 0$ a.e.\ comes from \cite[Lemma 7.3]{MS} and plays no part in the rest of the proof.

The fact that $f^T(x)$ is a singletone follows from the proof of \cite[Theorem 7.4]{MS} as we have there
$$
NC:=\bigl\{x:\ \operatorname{diam}(f^T(x))>0\bigr\}.
$$

In the case $n=2$, $p=1$ we know that weak limit of homeomorphisms satisfy the (INV) condition thanks to the \cite[Lemma 2.6]{DPP}. And we can use the proof of \cite[Theorem 7.4]{MS} with \cite[Remark 2.9]{DPP} instad of \cite[Lemma 7.3]{MS}.
\end{proof}

\begin{proof}[Proof of the positive part of Theorem \ref{secondthm}]
This follows from Theorem \ref{beterRepresentative}. The `moreover' part with the additional assumption that $J_f>0$ a.e.\ was known before, see \cite[Lemma 3.4]{MS}. Note that this lemma holds even in the case $p=1$, $n=2$.
\end{proof}

\begin{corollary}\label{beterRepresentativeIsBetter}
The representative $\hat{f}$ from Theorem \ref{beterRepresentative} satisfies a strengthened version of condition (INV), that is for every $a\in\Omega$ and $\cL^1$-a.e.\ $r\in(0,r_a)$
\begin{enumerate}[(i)]
	\item $\hat{f}(x)\in {\hat{f}}^T(B(a,r))\cup \hat{f}(S(a,r))$ for every $x\in \overline{B(a,r)}$ and \label{beterRepresentativeIsBetter-i}
	\item $\hat{f}(x)\in\R^n\setminus{\hat{f}}(B(a,r))$ for every $x\in\Omega\setminus B(a,r)$. \label{beterRepresentativeIsBetter-ii}
\end{enumerate}
\end{corollary}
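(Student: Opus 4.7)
The plan is to derive both (i) and (ii) from Theorem~\ref{beterRepresentative} (which provides $\hat{f}(x)\in f^T(x)=\bigcap_{s\notin N_x}E(\hat{f},B(x,s))$ for every $x\in\Omega$) together with Lemma~\ref{telescopic}. First I would fix $a\in\Omega$ and restrict to the co-null set of radii $r\in(0,r_a)$ on which $\hat{f}=f^\ast$ on $S(a,r)$, $f_k\to\hat{f}$ uniformly on $S(a,r)$ along a subsequence (Lemma~\ref{lem:MS2.9}), and Lemma~\ref{telescopic} applies with center $a$ and radius $r$.

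Statement (i) will follow directly. For $x\in S(a,r)$ the conclusion $\hat{f}(x)\in\hat{f}(S(a,r))$ is immediate. For $x\in B(a,r)$ I would select $s>0$ outside the relevant $\cL^1$-null sets with $B(x,s)\subset B(a,r)$: Theorem~\ref{beterRepresentative} yields $\hat{f}(x)\in E(\hat{f},B(x,s))$, and Lemma~\ref{telescopic}(i) provides the telescoping inclusion $E(\hat{f},B(x,s))\subset E(\hat{f},B(a,r))=\hat{f}^T(B(a,r))\cup \hat{f}(S(a,r))$.

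For (ii) I would argue by contradiction, assuming $\hat{f}(x)=\hat{f}(x')=:y$ for some $x\in\Omega\setminus B(a,r)$ and $x'\in B(a,r)$; by (i) at $x'$ one already has $y\in E(\hat{f},B(a,r))$. I would first treat the principal subcase $x\notin\overline{B(a,r)}$ by picking a generic $t>0$ with $B(x,t)\cap\overline{B(a,r)}=\varnothing$; then (i) at $x$ gives $y\in E(\hat{f},B(x,t))$, and Lemma~\ref{telescopic}(ii) prohibits $y\in\hat{f}^T(B(a,r))\cap\hat{f}^T(B(x,t))$, so $y$ must lie in $\hat{f}(S(a,r))\cup\hat{f}(S(x,t))$. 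A degree-theoretic argument based on the stability of the topological degree under the uniform convergence $f_k\to\hat{f}$ on spheres, combined with the injectivity of each $f_k$, will then produce the contradiction: if $y\in\hat{f}^T(B(a,r))$ there is a unique $\xi_k\in B(a,r)$ with $f_k(\xi_k)=y$ for large $k$; iterating the construction at any $\zeta\in S(x,t)$ with $\hat{f}(\zeta)=y$ (which must exist in this case) and applying Lemma~\ref{telescopic}(ii) to $B(\zeta,t')$ and $B(a,r)$ for generic small $t'$ forces preimages of $y$ on every sphere $S(\zeta,t')$, which contradicts $\deg(f_k,S(\zeta,t'),y)=0$ (since $\xi_k\notin B(\zeta,t')$) together with the stability of the degree. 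The remaining subcase $x\in S(a,r)$ I would reduce to the previous one by shrinking $r$ to a nearby generic $r'<r$ with $x\in\Omega\setminus\overline{B(a,r')}$ and $x'\in B(a,r')$, which is possible since the set of generic radii is co-null in any neighborhood of $r$.

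The hard part will be making the degree-theoretic step in (ii) fully rigorous: specifically, controlling the interaction between the pointwise values of $\hat{f}$ (which may differ from $f^\ast$ on a set of $p$-capacity zero) and the approximating homeomorphisms $f_k$, and showing that the persistence of $y$ in the sphere-image sets $\hat{f}(S(x,t))$ and $\hat{f}(S(\zeta,t'))$ for generic radii is actually incompatible with the homeomorphism nature of $f_k$. This parallels the classical M\"uller--Spector injectivity machinery and constitutes the technical heart of the argument.
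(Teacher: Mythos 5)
Your treatment of (i) is correct and is exactly what the cited Müller--Spector argument does: combine $\hat{f}(x)\in f^T(x)=\bigcap_s E(\hat{f},B(x,s))$ from Theorem~\ref{beterRepresentative} with the monotonicity given by Lemma~\ref{telescopic}(i). The paper itself gives no details, simply pointing to \cite[Corollary~7.5]{MS} with Lemma~\ref{telescopic} replacing their Lemma~7.3.

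Your argument for (ii), however, has a genuine gap, and also starts from the wrong negation. First, the intended statement (see how (ii) is actually invoked in the proof of the positive part of Theorem~\ref{firstthm}) is $\hat{f}(x)\notin\hat{f}^T(B(a,r))$, the topological image; the superscript $T$ is evidently just dropped by typo in the corollary. A point $y\in\hat{f}^T(B(a,r))$ need not have any $\hat{f}$-preimage inside $B(a,r)$, so you cannot begin the contradiction by positing $x'\in B(a,r)$ with $\hat{f}(x')=y$. The correct negation is simply: there exists $x\in\Omega\setminus B(a,r)$ with $\hat{f}(x)\in\hat{f}^T(B(a,r))$.

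Second, and more seriously, the final step does not close. After your iteration you obtain a point $\zeta'\in S(\zeta,t')$ with $\hat{f}(\zeta')=y$ for every generic small $t'$, and you try to contradict this with $\deg(f_k,S(\zeta,t'),y)=0$ together with stability of the degree. But stability \eqref{stability} needs $y\notin\hat{f}(S(\zeta,t'))$ for the limiting degree even to be defined, and that is precisely what your conclusion $y\in\hat{f}(S(\zeta,t'))$ negates. The existence of a preimage of $y$ on a sphere is in no conflict with $\deg(f_k,S(\zeta,t'),y)=0$, so no contradiction is reached. The iteration is a red herring: the moment you have $y=\hat{f}(\zeta)$ with $\zeta$ on a good sphere $S(x,t)$ disjoint from $\overline{B(a,r)}$, the argument should be applied at the sphere $S(a,r)$, not at tiny spheres around $\zeta$. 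Since $f_{k_j}(\zeta)\to\hat{f}(\zeta)=y$ by uniform convergence on $S(x,t)$, and since $y\notin\hat{f}(S(a,r))$ (as $y\in\hat{f}^T(B(a,r))$), for large $j$ the point $f_{k_j}(\zeta)$ lies in a small ball around $y$ disjoint from $f_{k_j}(S(a,r))$, whence $\deg(f_{k_j},S(a,r),f_{k_j}(\zeta))=\deg(f_{k_j},S(a,r),y)$, and the latter tends to $\deg(\hat{f},S(a,r),y)\neq 0$ by \eqref{stability}. But $\zeta\notin B(a,r)$ and $f_{k_j}$ is a homeomorphism, so $\deg(f_{k_j},S(a,r),f_{k_j}(\zeta))=0$ by \eqref{monotone}. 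This is the contradiction, and it is exactly what the corresponding case in \cite[Corollary~7.5]{MS} (via Lemma~\ref{telescopic}) gives; the remaining subcase $y\in\hat{f}^T(B(x,t))$ is settled directly by Lemma~\ref{telescopic}(ii), and $x\in S(a,r)$ is trivial since $\hat{f}(S(a,r))$ is disjoint from $\hat{f}^T(B(a,r))$ by definition.
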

\begin{proof}
The proof follows from \cite[Corollary 7.5]{MS} with regard for Lemma~\ref{telescopic} (or \cite[Remark 2.9]{DPP} for $n=2$, $p=1$) and Theorem~\ref{beterRepresentative}.
\end{proof}

\begin{proof}[Proof of the positive part of Theorem \ref{firstthm}]
	We assume that $f=\hat{f}$, where $\hat{f}$ is from Corollary \ref{beterRepresentativeIsBetter}. Suppose, by contradiction, that there is $\delta>0$
	such that for
	$$
	F = \{y\in \R^n\colon \diam(f^{-1}(\{y\}))>0\}
	$$
	we have $\cH^{n-1+\delta}(F)>0$. Clearly, $F=\bigcup_{k\in\mN} F_k$, where
	$$F_k = \Bigl\{y\in \R^n\colon \diam(f^{-1}(\{y\}))>\frac1{k}\Bigr\}.$$
	Hence we can fix $k\in\mN$ such that $\cH^{n-1+\delta}(F_k) > 0$.

	For each $x\in \Omega$ there is a radius $r_x < \frac1{2k}$, such that $f|_{S(x,r)} \in W^{1,p}(S(x,r),\R^n)\cap \cC^0 (S(x,r),\R^n)$ (see Lemma \ref{lem:MS2.9}) and the assertion of Corollary \ref{beterRepresentativeIsBetter} holds.
	Choosing a countable covering of $\Omega$ with balls $\{B(x_i,r_{x_i})\}_{i=1}^\infty$, due to the area formula \cite[Proposition~2.7]{MS}, we know that $\cH^{n-1}(f(S(x_i,r_{x_i})))<\infty$, so $\cH^{n-1+\delta}(f(S(x_i,r_{x_i}))) = 0$.
	Therefore, even for
	$$E:=\bigcup_{i=1}^\infty f(S(x_i,r_{x_i}))$$
	we have $\cH^{n-1+\delta}(E)=0$. We now claim, that $F_k \subset E$, which is the contradiction with $\cH^{n-1+\delta}(F_k) > 0$.

	Indeed, assume that $y\in F_k\setminus E$.
	Then there must be points $z_1$ and $z_2$ in $\Omega$, such that $f(z_1)=f(z_2)=y$ and $\dist(z_1,z_2) > \frac1{k}$.
	Fix $i$ for which $z_1\in B(x_i,r_{x_i})$, $z_2\notin B(x_i,r_{x_i})$ with the  balls $B(x_i,r_{x_i})$ covering $\Omega$ and $r_{x_i}<\frac1{2k}$.
	Because $y\notin E$ we know that $y\notin S(x_i,r_{x_i})$. Therefore, Corollary \ref{beterRepresentativeIsBetter} (i) states
	$$
	y = f(z_1) \in f^T(B(x_i,r_{x_i}))
	$$
	and the assertion (ii) holds
	$$
	y = f(z_2) \in \R^n\setminus f^T(B(x_i,r_{x_i})),
	$$
	which is a contradiction.
\end{proof}



\end{document}